\numberwithin{equation}{section}
\newtheorem{thm}[equation]{Theorem}
\newtheorem{cor}[equation]{Corollary}
\newtheorem{prop}[equation]{Proposition}
\newtheorem{lem}[equation]{Lemma}
\theoremstyle{definition}
\theoremstyle{remark}
\newtheorem{rmk}[equation]{Remark}
\def\co{\colon\thinspace}
\newcommand{\mb}[1]{\mathbb{#1}}
\newcommand{\mf}[1]{\mathfrak{#1}}
\newcommand{\mc}[1]{\mathcal{#1}}
\newcommand{\mit}[1]{\mathit{#1}}
\newcommand{\mr}[1]{\mathrm{#1}}
\newcommand{\mbf}[1]{\mathbf{#1}}
\newcommand{\Hom}{\ensuremath{{\rm Hom}}}
\newcommand{\overfrom}{\mathop\leftarrow}
\newcommand{\TAF}{{\rm TAF}}
\newcommand{\End}{{\rm End}}
\newcommand{\Aut}{{\rm Aut}}
\newcommand{\Spec}{{\rm Spec}}
\newcommand{\Spf}{{\rm Spf}}
\newcommand{\TMF}{{\rm TMF}}
\newcommand{\Sh}{{\rm Sh}}
\newcommand{\cF}{\bar{\mb F}}
\newcommand{\QQ}{\mathbb{Q}}
\newcommand{\AF}{\mathbb{A}}
\newcommand{\ZZ}{\mathbb{Z}}
\newcommand{\GG}{\mathbb{G}}
\newcommand{\FF}{\mathbb{F}}
\newcommand{\CC}{\mathbb{C}}
\newcommand{\RR}{\mathbb{R}}
\newcommand{\bra}[1]{\langle#1\rangle}
\DeclareMathOperator{\Tr}{Tr}
\DeclareMathOperator{\Cl}{Cl}
\DeclareMathOperator{\Gal}{Gal}
\DeclareMathOperator{\Lie}{Lie}
\newcommand{\comp}[1]{\ensuremath{#1^\wedge}}
\newcommand{\mmod}{{\sslash}}
\newcommand{\xym}[1]{
\vskip 0.7pc
\centerline{\xymatrix{#1}}
\vskip 0.7pc
}
\title{Topological automorphic forms on $U(1,1)$}
\author{Mark Behrens}
\address{Department of Mathematics, Massachusetts Institute of Technology, 
Cambridge, MA 02140}
\email{mbehrens@math.mit.edu}
\thanks{The first author was partially supported by NSF Grant 
\#0605100, a grant from the Sloan
foundation, and DARPA}
\author{Tyler Lawson}
\address{Department of Mathematics, University of Minnesota, 
Minneapolis, MN 55455}
\email{tlawson@math.umn.edu}
\thanks{The second author was partially supported by NSF Grant \#0805833}
\date{\today}
\subjclass[2000]{Primary 55N35;\\Secondary 55Q51, 55Q45, 11G15}
\keywords{homotopy groups, cohomology theories, automorphic forms, Shimura
varieties}
\begin{document}

\begin{abstract}
The homotopy type and homotopy groups of some spectra 
$\TAF_{GU}$ of topological
automorphic forms associated to a unitary similitude group $GU$ of type $(1,1)$
are explicitly described in quasi-split cases.  The
spectrum $\TAF_{GU}$ is shown to be closely related to the spectrum $\TMF$
in these cases, and homotopy groups of some of these spectra are explicitly
computed.    
\end{abstract}

\maketitle

\section{Introduction}

Let $F$ be a quadratic imaginary extension of $\QQ$, and $p$ be a fixed
prime which splits as $u \bar{u}$ in $F$.
Let $GU$ be the
group of unitary similitudes of a Hermitian form on an $F$-vector space of
signature $(1,n-1)$, and suppose that $K \subset GU(\AF^{p,\infty})$ 
is a compact open subgroup of
the finite adele points of $GU$ away from $p$.  Associated to this data is
a unitary Shimura variety $\Sh(K)$ whose complex points are given by an
adelic quotient
$$ \Sh(K)(\CC) \cong GU(\QQ) \setminus GU(\AF) / K\cdot K_p
\cdot K_\infty. $$
Here $K_p \subset GU(\QQ_p)$ is maximal compact, and $K_\infty \subset
GU(\RR)$ is maximal compact modulo center.  The Shimura variety admits
a $p$-integral model as a moduli stack of certain $n$-dimensional polarized 
abelian varieties $A$ with
complex multiplication by $F$ and level structure dependent on $K$.  
Over a $p$-complete ring, the complex
multiplication decomposes the formal completion of $A$ as
$$ \widehat{A} \cong \widehat{A}_u \oplus \widehat{A}_{\bar{u}} $$
and the summand $\widehat{A}_u$ is required to be $1$-dimensional.  We
refer the reader to \cite{HarrisTaylor}, \cite{kottwitz}, and \cite{taf}
for a detailed exposition of these moduli stacks.

In \cite{taf}, the authors used a theorem of Jacob Lurie to
construct  a $p$-complete $E_\infty$-ring spectrum
$\TAF_{GU}(K)$ of topological automorphic forms ``realizing'' the 
formal groups
$\widehat{A}_u$ associated to the Shimura
varieties $\Sh(K)$.  The height of the formal groups $\widehat{A}_u$ varies
between $1$ and $n$, and the resulting spectra are $v_n$-periodic.  
In \cite[Ch.~15]{taf}, the authors showed that in the
case of $n=1$, for appropriate choices of $GU$ and $K$, the associated
spectrum of topological automorphic forms is essentially a product of copies of $K$-theory indexed by the
class group of $F$.  Thus the $n=1$ case of the theory reduces to a
well-understood $v_1$-periodic cohomology theory. 

The purpose of this paper is to demystify the case of $n=2$, and show that, at
least in some cases, the resulting cohomology theory $\TAF_{GU}(K)$ is
closely related to $\TMF$, the Goerss-Hopkins-Miller theory of topological
modular forms.  Thus in these cases, the $n=2$ case of the theory of topological
automorphic forms reduces to the well-understood theory of topological
modular forms in a manner analogous to the way in which the $n = 1$ theory
reduces to $K$-theory.

In the case of $n = 2$, the $p$-completion of the 
associated moduli stack $\Sh(K)$ consists of
certain polarized abelian surfaces $A$ with complex multiplication by $F$ with
$\dim \widehat{A}_u = 1$.  In Section~\ref{sec:Honda-Tate}, we use the
Honda-Tate classification of isogeny classes of abelian varieties over
$\bar{\FF}_p$ to show that every $F$-linear abelian surface $A$ with $\dim
A_u = 1$ is isogenous to a product of elliptic curves, and that there is a
bijective correspondence between such isogeny classes of abelian surfaces
and the isogeny classes of elliptic curves.  Although this material serves
as motivation for the constructions of the later sections, the remainder of
the paper is independent of Section~\ref{sec:Honda-Tate}.

We concentrate solely on the case where the Hermitian form is
isotropic, and study the case of two compact open subgroups $K_0$ and $K_1$
that are, in some sense, extremal examples of maximal compact open
subgroups of $GU(\AF^{p,\infty})$.  A
detailed account of the initial data, the moduli functor represented
by $\Sh(K)$, and the associated cohomology theory $\TAF_{GU}(K)$ 
is given in Section~\ref{sec:moduli}.

Section~\ref{sec:tensor} gives a description of the moduli stack
$\Sh(K_0)$.
The
subgroup $K_0$ is the stabilizer of a non-self-dual lattice chosen such
that the moduli stack $\Sh(K_0)$ admits a complete uniformization by copies
of the moduli stack $\mc{M}_{\mit{ell}}$ of elliptic curves.  We show that there is an
equivalence:
$$ \coprod_{\Cl(F)} \mc{M}_{\mit{ell},\ZZ_p} \cong \Sh(K_0) 
 \quad \quad \text{(Theorem~\ref{thm:etale})}
$$
except in the cases where $F = \QQ(i)$ or
$\QQ(\omega)$, where a slight modification is given.  In
Section~\ref{sec:elliptictensor}, the associated
spectra of topological automorphic forms are computed to be 
$$ \TAF_{GU}(K_0) \simeq \prod_{\Cl(F)} \TMF_p 
 \quad \quad \text{(Theorem~\ref{thm:tensor})}
$$
except in the cases mentioned above.  These cases are analyzed separately.

In Section~\ref{sec:quotient} we study the moduli stack $\Sh(K_1)$.  The
subgroup $K_1$ is defined to be the stabilizer of a self-dual lattice, and
the resulting moduli stack may be taken to be the moduli stack of
\emph{principally} polarized abelian surfaces with complex multiplication by
$F$.  We are only able to give a description of a connected component
$\Sh(K_1)_0$ of
$\Sh(K_1)$.  Let $N$ be such that $F = \QQ(\sqrt{-N})$, and let
$\mc{M}_0(N)$ be the moduli stack of elliptic curves with
$\Gamma_0(N)$-structure.  We show that there
is an equivalence 
$$ \mc{M}_0(N)_{\ZZ_p} \mmod \bra{w} \cong \Sh(K_1)_0 
 \quad \quad \text{(Theorem~\ref{thm:quotient})}
$$
where $w$ is the Fricke involution, unless $N = 1$ or $3$, where slightly
different descriptions must be given.  The homotopy groups of the
corresponding summand of the spectrum $\TAF_{GU}(K_1)$ is analyzed in
Section~\ref{sec:ellipticquotient}.  In general, one takes the fixed points
of modular forms for $\Gamma_0(N)$ 
with respect to an involution.  Complete
computations are given in the cases $N = 1,2,3$.

\section{Honda-Tate theory of $F$-linear abelian
surfaces}\label{sec:Honda-Tate}

In this section we analyze the isogeny classes of abelian
surfaces $A$ over $\bar{\FF}_p$ with complex multiplication
$$ i : F \rightarrow \End^0(A), $$
using Honda-Tate theory.
The splitting
$$ {\mc O}_{F,p} \cong \ZZ_p \times \ZZ_p $$
induces a splitting of the formal group
$$ \widehat{A} \cong \widehat{A}_u \times \widehat{A}_{\bar{u}}. $$
We will always assume $\dim \widehat{A}_u = 1$.  This is equivalent to
assuming that the summand $A(u)$ of the $p$-divisible group $A(p)$ is
$1$-dimensional.   
The analysis of this section is independent of
the rest of the paper, but serves to motivate some of the constructions in
later sections.

We recall from \cite[Theorem 2.2.3]{taf} that the Honda-Tate
classification of simple abelian varieties implies that
isogeny classes of simple abelian
varieties over $\cF_p$ are in one-to-one correspondence with {\em
minimal $p$-adic types}.

Let $M$ be a CM field (a field with a complex conjugation $c$ whose
fixed field is totally real), and for any prime $x$ over $p$ we let
$f_x$ be the degree of the residue field extension and $e_x$ the
ramification index.  A $p$-adic type $(M,(\eta_x))$ consists of such a
CM field, together with positive rational numbers $\eta_x$ for all
primes $x$ over $p$ of $M$, satisfying the relation
\[
\eta_x /e_x + \eta_{c(x)} /e_x = 1
\]
for all $x$.  The associated simple abelian variety $A$ 
has $M = \mr{center}(\End^0(A))$ and dimension
$\frac{1}{2} [M:\mb Q] m$, where $m = [\End^0(A):M]^{1/2}$.
The $p$-divisible group of $A$ breaks up as the
sum of simple $p$-divisible groups $A(x)$, each with height $[M_x: \QQ_x]m$,
dimension $\eta_x f_x m$, and pure slope $\eta_x/e_x$.  
The $p$-completion of the
endomorphism ring of $A$ is a product over $x$ of division algebras
$\End^0(A(x))$ with center $M_x$ and invariant $\eta_x f_x$.

A map of CM fields $M \hookrightarrow M'$ takes a $p$-adic type $(M,(\eta_x))$
to $(M',(\eta_x e_{x'/x}))$, where $e_{x'/x}$ is the ramification degree of
the prime $x'$ over the prime $x$.  The $p$-adic type is minimal if it
is not in the image of such a non-identity map.

Simple $F$-linear abelian varieties are classified by initial objects of
the subcategory of $p$-adic types 
$(L, (\eta_x))$ under $F$.  Such $F$-linear
abelian varieties are isotypical, with simple type given by the minimal
$p$-adic type over $L$.  The associated simple $F$-linear 
abelian variety $A$ has $L = \mr{center}(\End^0_F(A))$ and
dimension
$\frac{1}{2} [L:\mb Q] t$, where $t = [\End^0_F(A):L]^{1/2}$.
The $p$-completion of the
endomorphism ring of $A$ is a product over $x$ of division algebras
$\End_{F_x}(A(x))$ with center $L_x$ and invariant $\eta_x f_x$.

Let $E$ be an elliptic curve over $\bar{\FF}_p$.  
Choosing a basis of $\mc{O}_F$ gives an inclusion
$$ \mc{O}_F \hookrightarrow M_2(\ZZ). $$
We associate to $E$ an
$F$-linear abelian surface
$$ E \otimes \mc{O}_F := E \times E $$
with complex multiplication given by the composite
$$ \mc{O}_F \hookrightarrow M_2(\ZZ) \hookrightarrow M_2(\End(E)) \cong \End(E
\times E). $$

We now classify the isogeny classes $[(A,i)]$ 
of $F$-linear abelian surfaces $(A,i)$
with $\dim \widehat{A}_u = 1$. 

\subsection*{Case 1: $(A,i)$ is simple.}

Associated to $(A,i)$ is a minimal $p$-adic type $(L,\eta)$ under $F$ with 
$$ 2 = \dim A = \frac{1}{2} [L: \QQ] t. $$
Since $L$ contains $F$, $[L:\QQ]$ is divisible  by $2$.
We therefore have two possibilities, corresponding to $t = 1$ and $t = 2$.

\subsubsection*{Subcase 1a: $t = 1$.}

In this case, $L = \End_F^0(A)$ is a 
CM-field which is quadratic extension of $F$.  In particular, $L$ is
totally complex, and posesses a unique involution $c$ for which $L^{\bra{c}}$ 
is totally real, and which restricts to conjugation on $F$.  Since $L$ is a
quadratic extension of $F$, there is also an involution $\sigma$ of $L$
satisfying $L^{\bra{\sigma}} = F$.  Since they give distinct fixed fields, the
involutions $\sigma$ and $c$ must be distinct, and we deduce that $L$ is
Galois over $\QQ$ with Galois group  
$$ \Gal(L/\QQ) \cong C_2 \times C_2 = \bra{ c, \sigma}. $$
The prime
$u$ of $F$ lying over $p$ is either split, ramified, or inert in $L$. 
It is easy to see that if $p$ is ramified or inert, any $p$-adic type
$\eta$ associated to $L$ must come from one on $F$, and so will not be
minimal under $F$.  Therefore, for $(L,\eta)$ to be minimal under $F$, $u$
must split as $v \sigma(v)$ in $L$.  
Then $\bar{u}$ splits as
$c(v) c\sigma(v)$.  We must have
$$ \eta_{\sigma^\epsilon v} + \eta_{c(\sigma^{\epsilon}v)} = 1 $$
for $\epsilon \in \{ 0,1\}$.
Since $A$ is $2$-dimensional, we deduce $\eta_{\sigma^\epsilon v} 
\in \{ 0, 1 \}$.  Since we are assuming $\dim A(u) = 1$, and 
$A(u) = A(v_1) \oplus A(v_2)$, one of the
$\eta_{\sigma^\epsilon v}$ must equal $1$ and the other must be $0$.  
Without loss of
generality, assume $\eta_{v} = 1$.  It follows that we must have
\begin{align*}
\eta_{c(v)} & = 0, \\
\eta_{\sigma(v)} & = 0, \\
\eta_{\sigma c(v)} & = 1. \\
\end{align*}

Although $L$ is a minimal $p$-adic type under $F$, it is not minimal under
$\QQ$.  Indeed, letting $F'$ be the quadratic imaginary of $\QQ$ given by 
$L^{\sigma c = 1}$, with conjugation 
$$ c' = c \vert_{F'} = \sigma \vert_{F'}, $$ 
the prime $p$ must split as $w c'(w)$ in $F'$.  The prime $w$ splits as $v
\sigma c (v)$ in $L$, and the prime $c'(w)$ splits as $c(v)\sigma(v)$ in
$L$.
The $p$-adic type $(L,\eta)$
is induced from a $p$-adic type $(F', \eta')$, where:
\begin{align*}
\eta'_{w} & = 1, \\
\eta'_{c'(w)} & = 0. \\
\end{align*}
The isogeny class of 
simple abelian varieties associated to $(F', \eta')$ is an elliptic curve $E$
with complex multiplication by $F'$
such that $E(w)$ is $1$-dimensional, and the isogeny class of $F$-linear
abelian varieties containing $(A,i)$ is
given by
$$ [(A,i)]  = [E \otimes \mc{O}_F]. $$ 

\subsubsection*{Subcase 1b: $t = 2$.}

In this case $L = F$.  Since we are assuming $\dim A(u) = 1$, we deduce
that $\eta_u = 1/2$.  We must therefore have $\eta_{\bar{u}} = 1/2$.

The $p$-adic type $(F,\eta)$ is not minimal under $\QQ$: it is induced from
the $p$-adic type $(\QQ, \eta')$ where $\eta_p = 1/2$.  The 
isogeny class corresponding to $(\QQ, \eta')$ is the isogeny class of a 
supersingular elliptic curve $E$.  Thus the $F$-linear isogeny class $(A,i)$
contains $E \otimes \mc{O}_F$ as a representative.

\subsection*{Case 2: $(A,i)$ is not simple.}

Then we must have
$$ [(A,i)] = [(A_1, i_1)] \oplus [(A_2, i_2)] $$
where $(A_j, i_j)$ are $1$-dimensional $F$-linear abelian varieties.  Thus
the abelian varieties $A_j$ are elliptic curves with complex multiplication
by $F$.  There is one isogeny class $[E]$ of elliptic curves with complex
multiplication by $F$, and a representative $E$ admits $2$ conjugate
complex multiplications.  There result two distinct $F$-linear isogeny
classes:
$$ [(E, i_0)] \quad \text{and} \quad [(\bar{E}, \bar{i}_0)]. $$
Here we use $E$ to denote the $F$-linear elliptic curve with $\dim E(u) =
1$, and $\bar{E}$ to be the same elliptic curve with conjugate complex
multiplication, so that $\dim \bar{E}(u) = 0$.  Since $\dim A(u) = 1$, we must
have an $F$-linear isogeny $A \simeq E \times \bar{E}$.  The diagonal
embedding of abelian varieties
$$ E \hookrightarrow E \times \bar{E} $$
extends to an $F$-linear quasi-isogeny
$$ E \otimes \mc{O}_F \xrightarrow{\simeq} E \times \bar{E} $$
where $\mc{O}_F$ acts on $E \otimes \mc{O}_F$ on the second factor only.
Thus the isogeny class is computed to be $[(A,i)] = [E \otimes \mc{O}_F]$.

\begin{prop}
The $F$-linear isogeny classes of $(A,i)$ over $\bar{\FF}_p$ 
with $\dim A(u) = 1$ are given by
\begin{description}
\item[Case 1a] $[E\otimes \mc{O}_F]$, where $E$ is an elliptic curve with
complex multiplication by a quadratic imaginary extension $F' \ne F$ in
which $p$ splits. 
\item[Case 1b] $[E\otimes \mc{O}_F]$, where $E$ is a supersingular 
elliptic curve. 
\item[Case 2] $[E \otimes \mc{O}_F]$, where $E$ is an elliptic curve with
complex multiplication by $F$.
\end{description}
\end{prop}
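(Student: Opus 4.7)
The plan is to split the analysis into the cases $(A,i)$ simple and $(A,i)$ non-simple, matching each to the Honda--Tate classification of minimal $p$-adic types under $F$ recalled above.

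In the simple case, the associated minimal $p$-adic type $(L,\eta)$ under $F$ satisfies $2\dim A = [L:\QQ] t$ with $t = [\End^0_F(A):L]^{1/2}$, so $[L:\QQ] t = 4$. Since $F \subset L$ forces $2 \mid [L:\QQ]$, only the possibilities $t = 1$ (with $[L:\QQ] = 4$) and $t = 2$ (with $L = F$) occur. For $t = 1$, I would first produce two commuting involutions on $L$, namely the CM involution $c$ (with $L^{\bra{c}}$ totally real) and the involution $\sigma$ fixing $F$; these must be distinct since $F$ is totally imaginary, so $L/\QQ$ is biquadratic with group $\bra{c,\sigma} \cong C_2 \times C_2$. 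Next I would argue that minimality of $(L,\eta)$ under $F$ forces $u$ to split in $L$: if $u$ were ramified or inert then the type would descend to $F$. Writing $u = v \sigma(v)$ in $L$, the relations $\eta_w / e_w + \eta_{c(w)}/e_w = 1$, the integrality forced by $\dim A = 2$, and the hypothesis $\dim A(u) = 1$ pin down $\eta$ on the four primes above $p$ uniquely up to relabeling. Recognizing $F' := L^{\bra{\sigma c}}$ as a quadratic imaginary subfield of $L$ in which $p$ splits, I would check that $\eta$ is actually induced from a $p$-adic type on $F'$, and thus identify the underlying simple abelian variety as an elliptic curve $E$ with CM by $F'$ and $\dim E(w) = 1$, so that $[(A,i)] = [E \otimes \mc{O}_F]$. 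For $t = 2$, the condition $\dim A(u) = 1$ together with $A = A(u) \oplus A(\bar u)$ forces slopes $\eta_u = \eta_{\bar u} = 1/2$, so $(F,\eta)$ is induced from $(\QQ, 1/2)$, giving a supersingular elliptic curve $E$ with $[(A,i)] = [E \otimes \mc{O}_F]$.

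In the non-simple case, decompose $(A,i)$ up to $F$-linear isogeny as $(A_1, i_1) \oplus (A_2, i_2)$ with each $A_j$ a one-dimensional $F$-linear abelian variety, hence an elliptic curve with CM by $F$. There is a unique isogeny class of such elliptic curves over $\bar{\FF}_p$, but it carries two conjugate $F$-structures distinguished by whether $\dim E(u)$ equals $1$ or $0$; call these $E$ and $\bar E$. The constraint $\dim A(u) = 1$ then forces one summand to be $E$ and the other to be $\bar E$. To identify the resulting isogeny class with $[E \otimes \mc{O}_F]$, I would construct an $F$-linear quasi-isogeny $E \otimes \mc{O}_F \to E \times \bar E$ extending the diagonal embedding $E \hookrightarrow E \times \bar E$, using that $\mc{O}_F$ acts on $E \otimes \mc{O}_F$ through the second tensor factor.

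The main technical obstacle is Subcase~1a: establishing both that minimality of $(L,\eta)$ under $F$ forces $u$ to split in $L$, and that after pinning down $\eta$ the type descends along $F' \hookrightarrow L$ so that the underlying simple abelian variety is a CM elliptic curve tensored up to an $F$-linear abelian surface. The remaining subcases amount to bookkeeping with slopes and dimensions once the $p$-adic type framework is in place.
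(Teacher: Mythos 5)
Your proposal is correct and follows the same structure as the paper's own argument: the dichotomy between simple and non-simple $F$-linear isogeny classes, the split of the simple case by $t = [\End^0_F(A):L]^{1/2} \in \{1,2\}$, the biquadratic Galois analysis with minimality forcing $u$ to split and the descent of the $p$-adic type to $F' = L^{\bra{\sigma c}}$, and the diagonal quasi-isogeny $E \otimes \mc{O}_F \to E \times \bar{E}$ in the non-simple case. There is no substantive difference from the paper's proof.
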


\begin{cor}
The construction
$$ [E] \mapsto [E \otimes \mc{O}_F] $$
gives a bijection between isogeny classes of elliptic curves over
$\bar{\FF}_p$, and isogeny
classes of $F$-linear abelian surfaces $A$ over $\bar{\FF}_p$ 
with $\dim A(u) = 1$.
\end{cor}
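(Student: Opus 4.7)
The plan is to derive the corollary directly from the preceding proposition. Surjectivity of the map $[E] \mapsto [E \otimes \mc{O}_F]$ is immediate, since each of Cases 1a, 1b, and 2 exhibits every target isogeny class as $[E \otimes \mc{O}_F]$ for some elliptic curve $E$ over $\cF_p$.

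For injectivity, I would first invoke the classical Honda-Tate classification of elliptic curves over $\cF_p$: the isogeny classes consist of one supersingular class and, for each quadratic imaginary extension $F'/\QQ$ in which $p$ splits, one class of ordinary elliptic curves with CM by $F'$. These match the three cases of the proposition bijectively --- supersingular curves producing Case 1b, ordinary curves with CM by $F$ producing Case 2, and ordinary curves with CM by $F' \ne F$ producing Case 1a. It therefore suffices to show that the three cases yield mutually non-isogenous $F$-linear abelian surfaces, and that within Case 1a distinct fields $F'$ give non-isogenous outputs. Both facts follow from intrinsic invariants already computed above: Case 2 is distinguished by non-simplicity of $(A,i)$, and within the simple cases the center $L = \mr{center}(\End^0_F(A))$ distinguishes Case 1b (where $L = F$) from Case 1a (where $[L:\QQ] = 4$), while within Case 1a the subfield $F' = L^{\sigma c = 1}$ recovers the CM field of the underlying elliptic curve. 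The main obstacle is purely bookkeeping --- confirming that the three cases are disjoint and exhaust both sides --- via the Honda-Tate dictionary already set up in the proof of the proposition.
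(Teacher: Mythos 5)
Your derivation is correct and is essentially the argument the paper leaves implicit after the proposition: surjectivity is read off case-by-case, and injectivity follows because the simple/non-simple dichotomy, the center $L=\mr{center}(\End^0_F(A))$, and in Case 1a the recovered subfield $F'=L^{\sigma c=1}$ reconstruct the Honda--Tate datum of $E$. The auxiliary fact you invoke --- that $\bar\FF_p$-isogeny classes of elliptic curves are exactly one supersingular class together with one ordinary class for each quadratic imaginary $F'$ in which $p$ splits --- is itself the one-dimensional instance of the Honda--Tate dictionary the paper has already set up, so nothing is missing.
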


\section{Overview of the Shimura stack}
\label{sec:moduli}

We first review the moduli problem represented by the Shimura stacks under
consideration.  A more complete description, with motivation, can be found
in \cite{taf}. Fix a prime $p$ and consider the following initial data: 
\begin{align*}
F  = & \: \text{quadratic imaginary extension of $\QQ$ in which $p$ splits as
$u\bar{u}$,} \\
\mc{O}_F = \: & \text{ring of integers of $F$,} \\
V = \: & \text{$F$-vector space of dimension $2$,} \\
\bra{-,-} = \: & \text{$\QQ$-valued non-degenerate hermitian alternating
form.}
\end{align*}
We require that, for a complex embedding of $F$, 
the signature of $\bra{-,-}$ on $V$ is $(1,1)$. When necessary, we will
regard 
$$ F = \QQ(\delta) \quad \text{where} \quad \delta^2 = -N $$ 
for a positive square-free integer $N$. 
Let $\iota$ denote the involution on
$\End_F(V)$, defined by $\bra{\alpha v, w} = \bra{v, \alpha^\iota w}$.

Let $GU = GU_V$ be the associated unitary similitude group over $\QQ$, 
with $R$-points
\begin{align*}
GU(R) = & \{ g \in \End_F(V) \otimes_\QQ R \: : \: \bra{gv, gw} = \nu(g)
\bra{v,w}, \: \nu(g) \in R^\times \} \\
= & \{ g \in \End_F(V) \otimes_\QQ R \: : \: g^\iota g \in R^\times \}.
\end{align*}

Let $\widehat{\ZZ}^p$ denote the product $\prod_{\ell \ne p} \ZZ_\ell$,
and let $\AF^{p,\infty}$ be the finite adeles away from $p$, so that we
have
$$ \AF^{p,\infty} = \widehat{\ZZ}^p \otimes \QQ. $$
We let $V^{p,\infty}$ denote $V \otimes \AF^{p,\infty}$.
For an abelian variety $A/k$, where $k$ is an algebraically closed field of
characteristic $0$ or $p$, we define
$$ V^p(A) = \left(\prod_{\ell \ne p} T_\ell(A) \right) \otimes \QQ $$
where 
$$ T_\ell(A) = \varprojlim_{i} A(k)[\ell^i] $$
denotes the covariant $\ell$-adic Tate module of $A$.

For every compact open subgroup 
$$ K \subset GU(\AF^{p,\infty}) $$
there is a Deligne-Mumford stack $\Sh(K)/ \Spec(\ZZ_p)$.  For a
locally noetherian connected $\ZZ_p$-scheme $S$,
and a geometric point $s$ of $S$,
the $S$-points of $\Sh(K)$ are the groupoid whose objects are tuples
$(A,i,\lambda,[\eta]_{K})$, with:
\begin{center}
\begin{tabular}{lp{19pc}}
$A$, & an abelian scheme over $S$ of dimension $2$,\\
$\lambda\co A \rightarrow A^\vee$, & a $\ZZ_{(p)}$-polarization, \\
$i\co \mc{O}_{F,(p)} \hookrightarrow \End(A)_{(p)}$, & an inclusion of
rings, such that the $\lambda$-Rosati involution is compatible with
conjugation, \\
$[\eta]_K$, & a $\pi_1(S,s)$-invariant $K$-orbit
of $F$-linear similitudes: \\
& \qquad $ \eta\co (V^{p,\infty}, \bra{-,-}) \xrightarrow{\cong}
(V^p(A_s), \bra{-,-}_\lambda), $
\end{tabular}
\end{center}
subject to the following condition:
\begin{equation}\label{eq:condition}
\text{the coherent sheaf $\Lie A \otimes_{\mc{O}_{F,p}} \mc{O}_{F,u}$ is
locally free of rank $1$.}
\end{equation}
Here, since $S$ is a $\ZZ_p$-scheme, the action of $\mc{O}_{F,(p)}$ on
$\Lie A$
factors through the $p$-completion $\mc{O}_{F,p}$.

The morphisms 
$$ (A, i, \lambda, \eta) \rightarrow (A', i', \lambda',\eta') $$
of the groupoid of $S$-points of $\Sh(K)$ are the
prime-to-$p$ quasi-isogenies of abelian schemes 
$$ \alpha\co A \xrightarrow{\simeq} A' $$
such that
\begin{alignat*}{2}
\lambda & = r\alpha^\vee \lambda' \alpha, 
\quad && 
r \in \ZZ_{(p)}^\times, \\
i'(z)\alpha & = \alpha i(z), 
\quad &&
z \in \mc{O}_{F,(p)}, \\
[\eta']_K & = [\eta \circ \alpha_*]_K.
\quad &&
\end{alignat*}

The $p$-completion $\Sh(K)^\wedge_p/\Spf(\ZZ_p)$ is determined by the
$S$-points of $\Sh(K)$ on which $p$ is locally nilpotent.  On such schemes,
the abelian surface $A$ 
has a $2$-dimensional, height $4$
$p$-divisible group $A(p)$, and the composite 
$$ \mc{ O}_{F,(p)} \xrightarrow{i} \End(A)_{(p)}
\to \End(A(p)) $$ 
factors through the $p$-completion
\[
\mc{O}_{F,p} \cong \mc{O}_{F,u} \times \mc{O}_{F,\bar u} \cong 
\mb Z_p \times \mb Z_p.
\]
Therefore, the action of $\mc{ O}_F$ naturally splits $A(p)$ into two
summands, $A(u)$ and $A(\bar u)$, both of height $2$.  For such schemes
$S$, Condition~(\ref{eq:condition}) is equivalent to the condition that
$A(u)$ is $1$-dimensional.  This forces the formal group of $A$ to split
into two $1$-dimensional formal summands.

\begin{rmk}
  We pause to relate this to the moduli discussed in \cite{taf}.
  There, the moduli at chromatic height $2$ actually consisted of
  $4$-dimensional abelian varieties with an action of an order $\mc{O}_B$
  in a
  $2$-dimensional central simple algebra $B$ over $F$ with involution of
  the second kind.  The moduli problem
  we consider here is the case where this order is $\mc{O}_B = 
  M_2(\mc{ O}_F)$,
  with involution being conjugate-transpose; any abelian variety $A$
  with such an action is canonically isomorphic to $A_0^2$ for some
  abelian surface $A_0$ with complex multiplication by $\mc{ O}_F$, 
  together with a
  polarization $A^2_0 \to (A^\vee)_0^2$ which is a product of two copies
  of the same polarization.
\end{rmk}

A theorem of Jacob Lurie \cite[Thm.~8.1.4]{taf} associates to a
$1$-dimensional $p$-divisible group $\GG$ over a 
locally noetherian separated Deligne-Mumford stack
$X/\mathrm{Spec}(\ZZ_p)$ which is locally a universal deformation of all of its
mod $p$ points, a (Jardine fibrant) presheaf
of $E_\infty$-ring spectra $\mc{E}_\GG$ on the site
$(X^\wedge_p)_{et}$.  
The construction is functorial in $(X,\GG)$: given another pair
$(X',\GG')$, a morphism $g\co X \rightarrow X'$ of Deligne-Mumford stacks 
over $\Spec(A)$, 
and an isomorphism of $p$-divisible groups
$\alpha\co \GG
\xrightarrow{\cong} g^* \GG'$, there is an induced map of presheaves of 
$E_\infty$-ring spectra
$$ (g,\alpha)^*\co g^*\mc{E}_{\GG'} \rightarrow \mc{E}_{\GG}. $$
A proof of this theorem has not yet appeared in print.

If $(\mbf{A}, \mbf{i}, \pmb{\lambda}, [\pmb{\eta}])$
is the universal tuple over $\Sh(K)$, then the $p$-divisible group
$\mbf{A}(u)$ satisfies the hypotheses of Lurie's theorem \cite[Sec.~8.3]{taf}.  The 
associated sheaf will be
denoted
$$ \mc{E}_{GU} := \mc{E}_{\mbf{A}(u)}. $$
The $E_\infty$-ring spectrum of topological automorphic forms is obtained
by taking the global sections:
$$ \TAF_{GU}(K) := \mc{E}_{GU}(\Sh(K)^\wedge_p). $$

For the remainder of this paper, we fix $V = F^2$, with alternating form:
$$ \bra{(x_1, x_2), (y_1, y_2)} = \Tr_{F/\QQ} \left( 
\begin{bmatrix} x_1 & x_2 \end{bmatrix}
\begin{bmatrix}0 & -1 \\ 1 & 0 \end{bmatrix}
\begin{bmatrix}\bar y_1 \\ \bar y_2\end{bmatrix} \right). $$
Let $GU = GU_V$ be the associated unitary similitude group.  This will be
the only case we shall consider in this paper.

\section{A tensoring construction}\label{sec:tensor}

In this section, let 
$L$ be the lattice $\mc{O}_F^2 \subset F^2 = V$, and let $K_0$ denote
the compact open subgroup of $GU(\AF^{p,\infty})$ given by
$$ K_0 = \{ g \in GU(\AF^{p,\infty}) \: : \: g(\widehat{L}^p) =
\widehat{L}^p \} $$
where $\widehat{L}^p := L \otimes \widehat{\ZZ}^p$.
In this section we will show that the associated Shimura variety $\Sh(K_0)$
is closely related to the 
moduli stack of elliptic curves.

Let $V_0 = \QQ^2$.
Let $\mc{M}_{\mathit{ell},\ZZ_p}$ be the base-change to $\Spec(\ZZ_p)$ of the 
moduli stack of elliptic curves.  For a
locally noetherian connected scheme $S$,
and a geometric point $s$ of $S$, the $S$-points may be taken to be the
groupoid whose objects are pairs $(E,\eta)$ with:
\begin{center}
\begin{tabular}{lp{19pc}}
$E$, & an elliptic scheme over $S$, \\
$[\eta]_{GL_2(\widehat{\ZZ}^p)}$, & a $\pi_1(S,s)$-invariant
$GL_2(\widehat{\ZZ}^p)$-orbit
of linear isomorphisms: \\
& \qquad $ \eta\co V_0^{p,\infty} \xrightarrow{\cong}
V^p(E_s). $
\end{tabular}
\end{center}
The morphisms 
$$ (E, [\eta]) \rightarrow (E', [\eta']) $$
of the groupoid of $S$-points of $\mc{M}_{\mit{ell},\ZZ_p}$ are the
prime-to-$p$ quasi-isogenies of elliptic schemes over $S$ 
$$ \alpha\co E \xrightarrow{\simeq} E' $$
such that
$$
[\eta']_{GL_2(\widehat{\ZZ}^p)} = [\eta \circ \alpha_*]_{GL_2(\widehat{\ZZ}^p)}.
$$
A brief explaination of why this groupoid of elliptic schemes over $S$ 
with level
structure up to quasi-isogeny is equivalent to the groupoid of elliptic
schemes over $S$ up to isomorphism is given in \cite[Sec.~3]{Behrensbeta}.

Let $I \subset F$ be a fractional ideal of $F$.  We define $I^\vee$ to be
the fractional ideal
$$ I^\vee = \{ z \in F \: : \: \Tr_{F/\QQ}(z\bar w) \in \ZZ  \: \text{for
all $w \in I$} \} $$
We let
$$ F^* = \Hom_\QQ(F,\QQ) $$
and define a lattice
$$ I^* = \{ \alpha \in F^* \: : \: \alpha(z) \in \ZZ \: \text{for all $z
\in I$} \}. $$
The bilinear pairing
\begin{align*}
F \otimes_\QQ F & \rightarrow \QQ \\
z \otimes w & \mapsto \Tr_{F/\QQ}(z \bar w) 
\end{align*}
induces an isomorphism
$$ \alpha_I: I^\vee \rightarrow I^*. $$

\begin{lem}\label{lem:idealideal}
Let $[I] \in \Cl(F)$ be an ideal class.  Then there exists a representative
$I$ such that: 
\begin{enumerate}
\item we have $I_{(p)} = (I^\vee)_{(p)} = \mc{O}_{F,(p)} \subset F$, 
\item we have $I \subset I^\vee$.
\end{enumerate}
\end{lem}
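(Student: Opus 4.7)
The plan is to choose a well-behaved representative of $[I]$ using standard Dedekind-domain arithmetic, then verify both conditions via the identification of $I^\vee$ with a shifted conjugate of $I$ by the codifferent of $F/\QQ$.

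First I would use the standard fact that in any Dedekind domain, every ideal class contains an integral representative coprime to any prescribed modulus. Applied here, this lets me replace $I$ by a representative (still denoted $I$) satisfying $I \subset \mc{O}_F$ and $(I,u) = (I,\bar u) = \mc{O}_F$. Then $I_{(p)} = \mc{O}_{F,(p)}$ is immediate. Next, unwinding the definition, $z \in I^\vee$ iff $\Tr_{F/\QQ}(z \bar I) \subset \ZZ$, which by the defining property of the codifferent means $z \bar I \subset \mf{d}_{F/\QQ}^{-1}$, where $\mf{d}_{F/\QQ}$ is the different ideal of $F/\QQ$. Hence
\[
I^\vee = \mf{d}_{F/\QQ}^{-1}\,\bar I^{-1}.
\]

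From this, condition (1) for $I^\vee$ follows: because $p$ splits in $F$, the extension $F/\QQ$ is unramified at $p$, so $\mf{d}_{F/\QQ}$ is coprime to $p$; and $\bar I$ is coprime to $p$ since $I$ is. Localizing at $p$ gives $(I^\vee)_{(p)} = \mc{O}_{F,(p)}$. For condition (2), the containment $I \subset I^\vee$ is equivalent to $I \bar I \subset \mf{d}_{F/\QQ}^{-1}$. For a quadratic imaginary field the integral ideal $I \bar I$ is generated by the positive integer $N(I)$, and since $\mc{O}_F \subset \mf{d}_{F/\QQ}^{-1}$ we have $(N(I)) \subset \mf{d}_{F/\QQ}^{-1}$, giving (2).

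I do not foresee any substantive obstacle. The argument is elementary number theory of quadratic imaginary fields; the only place the splitting hypothesis on $p$ enters genuinely is in ensuring that $\mf{d}_{F/\QQ}$ is coprime to $p$, and condition (2) in fact holds automatically for \emph{any} integral representative of $[I]$.
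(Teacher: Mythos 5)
Your proof is correct and fills in the paper's one-line sketch, which just cites weak approximation and notes the scaling rule $(aI)^\vee = \bar{a}^{-1}(I^\vee)$; choosing an integral representative of $[I]$ coprime to $p$ is the standard way to exploit exactly that freedom. Your explicit identification $I^\vee = \mf{d}_{F/\QQ}^{-1}\bar{I}^{-1}$ and the observation that condition (2) then holds automatically for any integral representative, since $I\bar{I} = (N_{F/\QQ}(I)) \subset \mc{O}_F \subset \mf{d}_{F/\QQ}^{-1}$, make the mechanism transparent and are worth recording.
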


\begin{proof}
Note that for $a \in F^\times$, $(aI)^\vee = \bar a^{-1} (I^\vee)$.  The
lemma is easily proven using the weak approximation theorem 
\cite[Thm.~6.3]{Milne}.
\end{proof}

Given an elliptic scheme $E/S$ and a fractional ideal $I \subset F$, we
associate a $2$-dimensional abelian scheme $E \otimes I/S$.  
Any chosen isomorphism
$I \to \mb Z^2$ gives rise to an isomorphism $E \otimes I \to E \times
E$.  Such an isomorphism gives a composite map
\[
\mc{ O}_F \hookrightarrow M_2(\mb Z) \to \End(E \times E) \overfrom^\sim \End(E
\otimes I)
\]
that is independent of the choice of isomorphism.  This construction
is natural in the elliptic curve and $\mc{ O}_F$-modules $I$.  In
particular, the action of $\mc{O}_F$ on $I$ gives $E\otimes I$ a canonical complex
multiplication
$$ i_I: \mc{O}_F \rightarrow \End(E \otimes I). $$
We have
$$ \Lie (E \otimes I) \otimes_{\mc{O}_{F,p}} \mc{O}_{F,u} \cong \Lie E
\otimes_{\ZZ_p} I_u. $$
In particular, Condition~\ref{eq:condition} is satisfied.

\begin{rmk}
The authors learned from the referee that the construction $E \otimes I$
has been around for some time.  See, for example, \cite[Sec.~2]{Milneav}.
\end{rmk}

The elliptic curve $E$ comes equipped with a canonical
principal polarization $\lambda\co E \to E^\vee$.  If $I$ satisfies
Lemma~\ref{lem:idealideal}(1)-(2), then one can define an
induced prime-to-$p$ polarization:
\[
\lambda_I \co E \otimes I \rightarrow E \otimes I^\vee
\xrightarrow{\lambda \otimes (\alpha_I)_*} E^\vee \otimes I^* \cong (E \otimes
I)^\vee.
\]
Note that this polarization is never principal, since the non-triviality of
the different ideal of $F$ implies 
$I^\vee \ne I$.
Replacing the ideal $I$ with the ideal $a I$ for any non-zero $a \in
\mc{O}_F$ gives an isogenous abelian variety with polarization rescaled by the positive integer
$N_{F/\QQ}(a)$.  Hence the isogeny class of
weakly polarized abelian surface $E \otimes I$ associated to $I$ depends
only on the ideal class represented by $I$.

Let $\bra{-,-}_0$ be the alternating form on $V_0 = \QQ^2$ given by
\begin{equation}\label{eq:bra0}
\bra{(x_1, x_2), (y_1, y_2)}_0 =  
\begin{bmatrix} x_1 & x_2 \end{bmatrix}
\begin{bmatrix}0 & -1 \\ 1 & 0 \end{bmatrix}
\begin{bmatrix} y_1 \\ y_2 \end{bmatrix}. 
\end{equation}
Let $\iota_0$ be the induced involution on $M_2(\QQ) = \End(V_0)$, defined
by
$$
\bra{\alpha v, w}_0 = \bra{v , \alpha^{\iota_0} w}_0. 
$$
Explicitly, we have
\begin{equation}\label{eq:iota0}
\begin{bmatrix}
a & b \\
c & d 
\end{bmatrix}^{\iota_0}
= 
\begin{bmatrix}
d & -b \\
-c & a 
\end{bmatrix}.
\end{equation}
Let $\bra{-,-}^F_0$ be the induced $\QQ$-valued 
alternating hermitian form on $V_0 \otimes F$ determined by
$$ \bra{x \otimes z, y \otimes w}_0^F = \bra{x,y}_0 \cdot \Tr_{F/\QQ}(z\bar
w). $$
There is a canonical isometry
$$
\omega: (V_0 \otimes F, \bra{-,-}^F_0)  \xrightarrow{\cong} (V, \bra{-,-})
$$
given by
$$ \omega((x_1, x_2) \otimes z) = (x_1 z, x_2 z). $$

For any $\QQ$-algebra $R$, and any $\alpha \in GL_2(R)$, we have 
$$ \alpha^{\iota_0} \alpha = \det
\alpha \in R^\times. $$
Therefore, any level structure
$$ \eta: V_0 \otimes \AF^{p,\infty} \xrightarrow{\cong} V^p(E) $$
is a similitude between $\bra{-,-}_0$ and $\bra{-,-}_{E}$, where the latter
is the Weil pairing on $V^p(E_s)$.  The inclusion $I \hookrightarrow F$
induces an isomorphism
$$ (V^p(E_s \otimes I), \bra{-,-}_{\lambda_I}) \cong (V^p(E_s)\otimes F,
\bra{-,-}_E^F). $$
The abelian variety $E_s \otimes I$ admits an induced level structure:
$$ \eta_I: V \otimes \AF^{p,\infty} \cong V_0 \otimes \AF^{p,\infty} 
\otimes F \xrightarrow{\eta \otimes 1} V^p(E_s) \otimes F \cong V^p(E_s \otimes
I). $$
Clearly, the $K_0$-orbit $[\eta_I]_{K_0}$ depends only on the
$GL_2(\widehat{\ZZ}^p)$-orbit $[\eta]_{GL_2(\widehat{\ZZ}^p)}$.

Fix a fractional ideal $I \subset F$ 
satisfying Lemma~\ref{lem:idealideal}(1)-(2).  We define a morphism of
stacks
$$ \Phi_I: \mc{M}_{\mit{ell},\ZZ_p} \rightarrow \Sh(K_0) $$
by
$$ \Phi_I(E,[\eta]) = (E \otimes I, i_I, \lambda_I, [\eta_I]). $$
Choosing representatives of each element of $\Cl(F)$ satisfying
Lemma~\ref{lem:idealideal}(1)-(2),
we get a morphism of stacks
$$
\amalg_{[I]} \Phi_I : \coprod_{\Cl(F)} \mc{ M}_{\mit{ell},\ZZ_p} \rightarrow \Sh(K_0).
$$

\begin{lem}\label{lem:aut}
Suppose $S$ is connected, let $(E,[\eta])$ be an $S$-object of
$\mc{M}_{\mit{ell},\ZZ_p}$, and let $I$ be a fractional ideal
satisfying Lemma~\ref{lem:idealideal}(1)-(2).  Then there is an
isomorphism
$$
\Aut_{\Sh(K_0)}(\Phi_I(E, [\eta])) \cong
\Aut_{\mc{M}_{\mit{ell},p}}(E, [\eta]) \times_{\{ \pm 1 \}} \mc{O}_{F}^\times.
$$
\end{lem}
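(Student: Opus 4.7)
The plan is to construct a homomorphism
\[
\Psi \co \Aut_{\mc{M}_{\mit{ell},p}}(E,[\eta]) \times \mc{O}_F^\times \longrightarrow \Aut_{\Sh(K_0)}(\Phi_I(E,[\eta])), \qquad (\beta, z) \mapsto \beta \otimes z,
\]
identify its kernel as the diagonal $\{\pm 1\}$, and verify surjectivity.

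The first step is to identify the endomorphism ring $\End_{\mc{O}_F}(E \otimes I) \cong \End(E) \otimes_{\ZZ} \mc{O}_F$, which follows from the naturality of the tensoring construction and the fact that $I$ is locally free of rank one over $\mc{O}_F$. An automorphism of $\Phi_I(E,[\eta])$ in $\Sh(K_0)$ is then an invertible element $\alpha \in (\End(E) \otimes \mc{O}_F)_{(p)}^\times$ compatible with $\lambda_I$ and with $[\eta_I]_{K_0}$. Under this identification the Rosati involution of $\lambda_I$ acts as $\dagger \otimes \bar{\cdot}$, where $\dagger$ is the Rosati involution of the canonical polarization of $E$ and $\bar{\cdot}$ is complex conjugation on $\mc{O}_F$; this is forced by the construction $\lambda_I = \lambda \otimes (\alpha_I)_*$ together with the conjugation-equivariance of $\alpha_I \co I^\vee \to I^*$.

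With these identifications in hand, verification that $\Psi$ is well-defined is routine. For polarization: $(\beta \otimes z)^\vee (\beta \otimes z) = \deg(\beta) \cdot N_{F/\QQ}(z)$ lies in $\ZZ_{(p)}^\times$. For level structure: the conjugate $\omega^{-1} \circ \eta_I^{-1} \circ (\beta \otimes z)_* \circ \eta_I \circ \omega$ acts on $V_0 \otimes F$ as $\tilde\beta \otimes z$, where $\tilde\beta := \eta^{-1} \circ \beta_* \circ \eta$ lies in $GL_2(\widehat{\ZZ}^p)$ (because $\beta$ preserves the $GL_2(\widehat{\ZZ}^p)$-orbit of $\eta$) and multiplication by $z \in \mc{O}_F^\times$ preserves the $\mc{O}_F \otimes \widehat{\ZZ}^p$ factor, placing the composite in $K_0$. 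The kernel of $\Psi$ is then computed directly: since $\End^0(E)$ is a free $\QQ$-module with $1$ in a basis, $\beta \otimes z = 1 \otimes 1$ in $\End^0(E) \otimes_\QQ F$ forces $\beta \in \QQ$ and $z = \beta^{-1} \in \QQ$; combined with $\beta \in \Aut_{\mc{M}_{\mit{ell},p}}(E,[\eta])$ and $z \in \mc{O}_F^\times$, this pins down $\beta = z = \pm 1$. Hence $\Psi$ descends to an injection from $\Aut_{\mc{M}_{\mit{ell},p}}(E,[\eta]) \times_{\{\pm 1\}} \mc{O}_F^\times$.

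\textbf{The main obstacle is surjectivity.} Given $\alpha$, choose a $\ZZ$-basis $\{1,\tau\}$ of $\mc{O}_F$ and expand $\alpha = \alpha_0 \otimes 1 + \alpha_1 \otimes \tau$ with $\alpha_j \in \End(E)_{(p)}$. Expanding $\alpha^\vee \alpha$ using the involution $\dagger \otimes \bar{\cdot}$, the vanishing of the $\tau$-coefficient (required for $\alpha^\vee \alpha \in \ZZ_{(p)}^\times \cdot (1 \otimes 1)$) yields the identity
\[
\alpha_0^\dagger \alpha_1 \;=\; \alpha_1^\dagger \alpha_0 \qquad \text{in } \End^0(E).
\]
Since $\dagger$ makes $\End^0(E)$ a positive involution algebra whose fixed subring equals $\QQ$ (the centre in the CM and quaternionic cases, all of $\End^0(E)$ in the generic case), this identity forces $\alpha_0^\dagger \alpha_1 \in \QQ$; assuming $\alpha_0 \ne 0$, one then gets $\alpha_1 = q\, \alpha_0$ for some $q \in \QQ$. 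Hence $\alpha$ decomposes as a pure tensor $\beta \otimes z$ in $\End^0(E) \otimes F$. It remains to rescale $(\beta, z) \mapsto (c\beta, c^{-1} z)$ by an appropriate $c \in \QQ^\times$ so that $\beta \in \Aut_{\mc{M}_{\mit{ell},p}}(E,[\eta])$ and $z \in \mc{O}_F^\times$. The level structure condition, which demands $z\tilde\beta \in GL_2(\widehat{\mc{O}}_F^p)$, together with the $p$-integrality of $\alpha$ and the polarization constraint, pick out the correct integral normalization. This last normalization is the main technical obstacle: the tensor-product ambiguity must be resolved consistently with both integrality and the lattice-preservation requirement, after which the pair $(\beta, z)$ has the asserted form.
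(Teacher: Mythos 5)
Your proposal follows essentially the same route as the paper's proof: identify $\End^0_{\mc{O}_F}(E\otimes I) \cong \End^0(E) \otimes_\QQ F$, observe that the $\lambda_I$-Rosati involution is $\alpha \otimes z \mapsto \alpha^\vee \otimes \bar z$, write a general element as $\alpha_0 \otimes 1 + \alpha_1 \otimes \tau$ and use the self-duality of $\alpha_0^\vee\alpha_1$ (together with the fact that self-dual quasi-isogenies of $E$ are $\QQ$-scalars) to force any polarization-preserving element to be a pure tensor, and then sort out the prime-to-$p$ and level-structure conditions. The repackaging as a map $\Psi$ with kernel $\{\pm 1\}$ is a harmless change of exposition and the kernel computation is correct.

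The one step you flag but do not actually carry out is the final normalization in the surjectivity argument: having written an automorphism as a pure tensor $\beta \otimes z \in \End^0(E) \otimes F$ modulo the $\QQ^\times$-ambiguity, you must show the representative can be chosen so that $\beta$ is a prime-to-$p$ quasi-isogeny preserving $[\eta]$ and $z \in \mc{O}_F^\times$. This is where the paper does the real work: the identification $(E\otimes I)(p) \cong E(p)\otimes I$ (with $I_{(p)} = \mc{O}_{F,(p)}$) shows that being prime-to-$p$ forces $\beta$ prime to $p$ and $z \in \mc{O}_{F,(p)}^\times$, while the explicit description of $K_0$ as the stabilizer of $\widehat{L}^p = \widehat{L}_0^p \otimes \widehat{\mc{O}}_F^p$ shows that preserving $[\eta_I]_{K_0}$ forces $\beta_*(\eta(\widehat{L}_0^p)) = \eta(\widehat{L}_0^p)$ and $z \in \widehat{\mc{O}}_F^p$; since $z$ is a global element of $F^\times$ these local conditions pin down $z \in \mc{O}_F^\times$ and $\beta \in \Aut_{\mc{M}_{\mit{ell},\ZZ_p}}(E,[\eta])$. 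You assert that ``the level structure condition \dots together with the $p$-integrality of $\alpha$ and the polarization constraint, pick out the correct integral normalization,'' which is the right intuition but is not an argument; supplying the prime-by-prime analysis above would close the gap.

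Two small additional remarks. Your claim $\End_{\mc{O}_F}(E\otimes I)\cong\End(E)\otimes_\ZZ\mc{O}_F$ (an integral refinement of what the paper uses rationally) is correct since $I$ is an invertible $\mc{O}_F$-module, so $\End_{\mc{O}_F}(I) = \mc{O}_F$. And your identity $\alpha_0^\vee\alpha_1 = \alpha_1^\vee\alpha_0$ from the vanishing of the $\tau$-component is independent of whether $\tau = \sqrt{-N}$ or $\tau = (1+\sqrt{-N})/2$, so the argument is basis-independent as you implicitly use.
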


\begin{rmk}
Since $F$ is a quadratic imaginary extension of $\QQ$, the group
$\mc{O}_F^\times$ is cyclic of order $4$ if $F = \QQ(i)$,
cyclic of order $6$ if $F = \QQ(\omega)$, and isomorphic to 
$\{ \pm 1 \}$ otherwise. Thus, except for two exceptions, $\Phi_I$
preserves automorphism groups.  Note that both $\QQ(i)$ and $\QQ(\omega)$
have class number $1$.
\end{rmk}

\begin{proof}[Proof of Lemma~\ref{lem:aut}]
Let 
$$ \End^0(-) = \End(-) \otimes \QQ $$ 
denote the ring of
quasi-endomorphisms.
Note that 
$$ \End^0_{\mc{O}_F}(E \otimes I) = \End^0(E) \otimes
F. $$
The $\lambda_I$-Rosati involution $\dag_I$ on $\End^0_{\mc{O}_F}(E \otimes
I)$ under this identification is given by
$$ (\alpha \otimes z)^{\dag_I} = \alpha^{\vee} \otimes \bar{z} $$
where $\alpha^\vee$ is the dual isogeny (the image of $\alpha$ under the
Rosati involution on $\End^0(E)$ corresponding to the unique weak
polarization on $E$).
An $F$-linear quasi-isogeny
$$ f: E \otimes I \rightarrow E \otimes I $$
preserves the weak polarization $\lambda_I$ if and only if $f^{\dag_I} f \in
\QQ^\times$.  If we write a general element 
$$ f = \alpha \otimes 1 + \beta \otimes \delta, $$ 
where
$\alpha, \beta \in \End^0(E)$,
we either have $\alpha = 0$, or $\beta = 0$, or both $\alpha$ and $\beta$ are
non-zero.  Assume we are in the last case.
We find the following:
\begin{align*}
\label{eq:polcompat}
f^{\dag_I} f & = (\alpha^{\vee} \otimes 1 - \beta^{\vee} \otimes \delta)
(\alpha \otimes 1 + \beta \otimes
\delta ) \\
& = (\alpha^{\vee} \alpha + N \beta^{\vee} \beta) \otimes 1 + 
(\alpha^{\vee} \beta -
\beta^{\vee} \alpha) \otimes \delta
\end{align*}
Hence the requirement is that $\alpha^{\vee} \beta$, and hence $\alpha
\beta^{-1}$ or $\beta \alpha^{-1}$, are self-dual isogenies.  
The only such endomorphisms of an elliptic curve are those which are
locally scalar multiplication, and since $S$ is connected we find
$r\alpha = s \beta$ for some integers $r,s$.  This forces the element
to be of the form $\phi \otimes z$ for some $z \in F$, and $\phi
\in \End^0(E)$. Therefore, the group of $\mc{O}_F$-linear
quasi-isogenies $E \rightarrow E$ which preserve the 
weak polarization is the group of elements 
$$ \alpha \otimes z  \in \End^0(E) \otimes F $$ \
with $\alpha \in \End^0(E)^\times$ and $z \in F^\times$.  

A quasi-isogeny of elliptic curves
$$ \alpha : E \rightarrow E $$
is prime-to-$p$ if and only if the associated
quasi-isogeny of $p$-divisible groups
$$ \alpha_*: E(p) \rightarrow E(p) $$
is an isomorphism.  An $\mc{O}_F$-linear quasi-isogeny 
$$ \beta: E \otimes I \rightarrow E \otimes I $$
is prime-to-$p$ if and only if the associated quasi-isogeny of
$p$-divisible groups
$$ \beta_* : (E \otimes I)(p) \rightarrow (E \otimes I)(p) $$
is an isomorphism.  Since there is a canonical isomorphism $(E \otimes
I)(p) \cong E(p) \otimes I$, we deduce that a quasi-isogeny $\alpha \otimes
z$ of $E \otimes I$ is prime-to-$p$, for $\alpha \in \End^0(E)$ and $z
\in F$, if and only if $\alpha$ is prime to $p$ and $z \in
\mc{O}_{F,{(p)}}^\times$.  (Here, $E(p) \otimes I$ denote the analogous
tensoring construction applied to the $p$-divisible group $E(p)$.)

Finally, suppose that $\alpha \otimes z$ preserves the
level structure $[\eta_I]_{K_0}$.  This happens if and only
if there exists a $g \in K_0$ such that the following
diagram commutes.
$$
\xymatrix{
V_0^p \otimes F 
\ar[r]^{\eta \otimes 1} 
\ar[d]_{g} 
& V^p(E_s) \otimes F 
\ar[d]^{\alpha_* \otimes z} 
\\
V_0^p \otimes F 
\ar[r]_{\eta \otimes 1}
& V^p(E_s) \otimes F
}
$$
This will happen if and only if
$$ \eta^{-1} \alpha_* \eta \otimes z \in K_0 $$
which in turn happens if and only if for the lattice
$$ L_0 = \ZZ^2 \subset \QQ^2 = V_0 $$
we have
\begin{align*}
\alpha_*(\eta(\widehat{L}_0^p)) & = \eta(\widehat{L}_0^p), 
\\
z & \in \widehat{\mc{O}}^p_F.
\end{align*}
The first condition is equivalent to asserting that the
quasi-isogeny $\alpha$ preserves the level structure
$[\eta]_{GL_2(\widehat{\ZZ}^p)}$.

Putting this all together,
we conclude that $\alpha \otimes z$ represents an
automorphism of $\Phi_I(E, [\eta])$
in $\Sh(K_0)$ if and only if $\alpha$ is an automorphism of $(E, [\eta])$
in $\mc{M}_{\mit{ell},\ZZ_p}$ and $z \in \mc{O}_F^\times$.  The lemma follows
from the fact that 
$$ (-\alpha) \otimes z = \alpha \otimes (-z). $$
\end{proof}

\begin{lem}\label{lem:isoclass}
Suppose that $I$ and $I'$ are two ideals satisfying
Lemma~\ref{lem:idealideal}(1)-(2), let $(E, [\eta])$ and $(E', [\eta'])$
be objects of $\mc{M}_{\mit{ell},\ZZ_p}(S)$.  Then $\Phi_I(E)$ is
isomorphic to $\Phi_{I'}(E')$ in $\Sh(K_0)(S)$ if and only if 
$(E,[\eta])$ is
isomorphic to $(E', [\eta'])$ in $\mc{M}_{\mit{ell},\ZZ_p}(S)$ 
and $[I] = [I'] \in \Cl(F)$.
\end{lem}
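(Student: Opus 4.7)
The plan is to prove each direction separately. The "if" direction is a direct construction, while the "only if" direction parallels the analysis in the proof of Lemma~\ref{lem:aut}.

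For the "if" direction, functoriality of $\Phi_I$ in $(E,[\eta])$ reduces the claim to showing $\Phi_I(E,[\eta]) \cong \Phi_{I'}(E,[\eta])$ whenever $[I]=[I']$. Since both $I$ and $I'$ satisfy Lemma~\ref{lem:idealideal}(1), we have $I_{(p)}=I'_{(p)}=\mc{O}_{F,(p)}$, and the rational identifications $(E\otimes I)\otimes \QQ \cong E\otimes_\QQ F \cong (E\otimes I')\otimes \QQ$ produce a canonical prime-to-$p$ quasi-isogeny $f\co E\otimes I\to E\otimes I'$ corresponding to $1\otimes 1$ in $\Hom^0_{\mc{O}_F}(E\otimes I,\,E\otimes I') = \End^0(E)\otimes_\QQ F$. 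A direct computation using these identifications shows $f^\vee\lambda_{I'}f = \lambda_I$ and $f_*\eta_I = \eta_{I'}$, so $f$ furnishes the required isomorphism in $\Sh(K_0)$.

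For the "only if" direction, let $f\co E\otimes I\to E'\otimes I'$ be an isomorphism in $\Sh(K_0)(S)$. Since $f$ is a non-zero $\mc{O}_F$-linear prime-to-$p$ quasi-isogeny and $S$ is connected, $E$ and $E'$ are isogenous, and
\[
\Hom^0_{\mc{O}_F}(E\otimes I,\,E'\otimes I') \;=\; \Hom^0(E,E')\otimes_\QQ F.
\]
Writing $f = \alpha\otimes 1 + \beta\otimes \delta$ with $\alpha,\beta\in\Hom^0(E,E')$, the computation of $f^\dag f$ carried out exactly as in the proof of Lemma~\ref{lem:aut} (with $\dag$ the Rosati-type operator adapted to the pair $(\lambda_I,\lambda_{I'})$) shows that compatibility with the weak polarizations forces $f$ to be a pure tensor $f = \phi\otimes z$ with $\phi\in\Hom^0(E,E')^\times$ and $z\in F^\times$.

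The prime-to-$p$ condition then yields $\phi$ prime-to-$p$ and $z\in\mc{O}_{F,p}^\times$ via the identifications $(E\otimes I)(p)\cong E(p)\otimes\mc{O}_{F,p}$ and $(E'\otimes I')(p)\cong E'(p)\otimes\mc{O}_{F,p}$, under which the $p$-divisible-group isomorphism factors as $\phi_p\otimes z_p$. The level-structure compatibility $[\eta_{I'}']_{K_0}=[f_*\eta_I]_{K_0}$, unwound via $\omega$, amounts to the requirement that $M z\in GL_2(\widehat{\mc{O}}_F^p)$ where $M=(\eta')^{-1}\phi_*\eta\in GL_2(\AF^{p,\infty})$; reading off the $F$-valuation of $z$ from the determinant of this element and using the $\QQ^\times$-ambiguity $(\phi,z)\sim(n\phi,z/n)$ to normalize the decomposition, one extracts both the equality $zI=I'$ of fractional ideals (hence $[I]=[I']\in\Cl(F)$) and a prime-to-$p$ quasi-isogeny representative with $(\eta')^{-1}\phi_*\eta\in GL_2(\widehat{\ZZ}^p)$, giving the desired isomorphism $(E,[\eta])\cong(E',[\eta'])$ in $\mc{M}_{\mit{ell},\ZZ_p}(S)$.

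The main obstacle is this combined integrality-and-level-structure argument in the "only if" direction: the $\mc{O}_F$-lattice structures on $I,I'$, the prime-to-$p$ condition, and the $K_0$-equivalence of level structures must be used in concert, and the $\QQ^\times$-ambiguity in $(\phi,z)$ must be navigated to simultaneously read off $[I]=[I']$ and a level-preserving isomorphism of the underlying elliptic curves. The details closely parallel and slightly extend the final computation in the proof of Lemma~\ref{lem:aut}.
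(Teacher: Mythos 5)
Your "only if" direction follows the paper's structure closely: identify $\Hom^0_{\mc{O}_F}(E\otimes I,\,E'\otimes I')$ with $\Hom^0(E,E')\otimes F$, use the Rosati-type operator $\dag_{I,I'}$ to force $f=\phi\otimes z$ to be a simple tensor, then read off the prime-to-$p$ constraints and the level-structure constraints. The paper simply records that level-structure preservation forces $zI_\ell=I'_\ell$ for all $\ell\ne p$, whence $zI=I'$; your version is looser in exactly this step (talk of ``reading off the $F$-valuation from the determinant'' and ``navigating the $\QQ^\times$-ambiguity'' is not actually carried out, and it is precisely the derivation of $zI_\ell=I'_\ell$ that does the work).

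The genuine problem is in your "if" direction. You take $f$ to be the quasi-isogeny corresponding to $1\otimes 1\in\End^0(E)\otimes F$ and assert without proof that $f_*\eta_I=\eta_{I'}$. Note that this construction makes no use of the hypothesis $[I]=[I']$: the quasi-isogeny $1\otimes 1$ exists and is prime to $p$ for \emph{any} pair $I,I'$ satisfying Lemma~\ref{lem:idealideal}(1)--(2). If your assertion $f_*\eta_I=\eta_{I'}$ were correct, you would have proved $\Phi_I(E,[\eta])\cong\Phi_{I'}(E,[\eta])$ for every such pair, directly contradicting the "only if" direction (and also the class-number computation later in the section, which shows $\pi_0\Sh(K_0)\cong\Cl(F)$). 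So the claim $f_*\eta_I=\eta_{I'}$ must fail when $[I]\ne[I']$, and your proof of the "if" direction has a gap exactly at the step where the ideal class enters. The paper instead writes $I'=aI$ for some $a\in F^\times$, uses $I_{(p)}=I'_{(p)}$ to conclude $a\in\mc{O}_{F,(p)}^\times$, and takes $f=1\otimes a$; it is this specific $a$ (which only exists because $[I]=[I']$) that makes the level-structure condition $aI_\ell=I'_\ell$ hold and hence lets $f$ descend to a morphism of $\Sh(K_0)$. You should redo the "if" direction with $1\otimes a$ and verify the level-structure compatibility honestly, rather than taking $1\otimes 1$ and asserting the equality of level structures.
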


\begin{proof}
Clearly if $(E, [\eta])$ is isomorphic to $(E', [\eta'])$ then $\Phi_I(E,
[\eta])$ is isomorphic to $\Phi_I(E', [\eta'])$.  Moreover, if $[I'] =
[I]$, then there is an $a \in F^\times$ such that $I' = aI$.  Since
$I_{(p)} = I'_{(p)}$, we deduce that $a \in \mc{O}_{F,{(p)}}$.  The
mapping
$$ 1 \otimes a: E \otimes I \rightarrow E \otimes I' $$
is a prime-to-$p$, $\mc{O}_F$-linear, quasi-isogeny that preserves the
weak polarization and level structure.

Conversely, suppose that
$$ f: E \otimes I \rightarrow E' \otimes I' $$ 
gives an isomorphism between $\Phi_I(E)$ and $\Phi_{I'}(E')$.  There is an
isomorphism
\begin{equation}\label{eq:homiso}
\Hom^0_{\mc{O}_F}(E \otimes I, E' \otimes I') \cong \Hom^0(E,E')
\otimes F.
\end{equation}
The pair of polarizations $\lambda_I$, $\lambda_{I'}$ induces a
homomorphism
$$ \dag_{I,I'}: \Hom^0_{\mc{O}_F}(E \otimes I, E' \otimes I')
\rightarrow 
\Hom^0_{\mc{O}_F}(E' \otimes I', E \otimes I) $$
such that an $\mc{O}_F$-linear quasi-isogeny $g$ preserves the weak 
polarization if and only if
$$ g^{\dag_{I,I'}} \circ g \in \QQ^\times. $$
Under the isomorphism (\ref{eq:homiso}), we have
$$ (\beta \otimes w)^{\dag_{I,I'}} = \beta^\vee \otimes \bar{w}. $$
Thus similar arguments as given in the proof of Lemma~\ref{lem:aut}
imply that there exists a prime-to-$p$ isogeny
$$ \alpha: E \rightarrow E' $$
and $z \in \mc{O}_{F,(p)}^\times$ 
so that $f = \alpha \otimes z$.  Since $f$ preserves level
structures, we deduce that:
\begin{align*}
[\alpha_* \eta] & = [\eta'],\\
zI_{\ell} & = I'_\ell, \quad \text{for all $\ell \ne p$.}
\end{align*}
Since $I_p = I'_p$, we conclude that $I' = zI$.
\end{proof}

The following theorem gives a complete description of $\Sh(K_0)$ in terms
of the moduli stack of elliptic curves.

\begin{thm}\label{thm:etale}
$\quad$
\begin{enumerate}
\item If $F = \mb Q(i)$, the map $\mc{ M}_{\mit{ell},\ZZ_p} \to
  \Sh(K_0)$ is a degree 2 Galois cover of Deligne-Mumford stacks.
\item If $F = \mb Q(\omega)$, the map $\mc{ M}_{\mit{ell},\ZZ_p} \to
  \Sh(K_0)$ is a degree 3 Galois cover of Deligne-Mumford stacks.
\item In all other cases, the map $\coprod_{\Cl(F)} \mc{ M}_{\mit{ell},\ZZ_p} 
\to
  \Sh(K_0)$ is an equivalence.
\end{enumerate}
\end{thm}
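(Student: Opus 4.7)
The plan is to combine the automorphism-group computation of Lemma~\ref{lem:aut}, the isomorphism-class computation of Lemma~\ref{lem:isoclass}, and an essential surjectivity statement for $\coprod_{[I]}\Phi_I$. Once essential surjectivity is in hand, the three cases read off directly. In case (3), $\mc{O}_F^\times=\{\pm 1\}$, so Lemma~\ref{lem:aut} gives equality of automorphism groups while Lemma~\ref{lem:isoclass} gives a bijection on isomorphism classes, yielding an equivalence of stacks. In cases (1) and (2), the class number of $F$ is one, so only the single map $\Phi_{\mc{O}_F}$ is relevant; Lemma~\ref{lem:aut} exhibits the ``extra'' fiber automorphisms as a free action of $\mc{O}_F^\times/\{\pm 1\}$, of order $2$ or $3$ respectively, which identifies $\Phi_{\mc{O}_F}$ as the claimed Galois cover.

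For essential surjectivity, given $(A,i,\lambda,[\eta])\in\Sh(K_0)(S)$ with $S$ connected and geometric point $s$, I would first use the level structure together with the isometry $\omega\co V_0\otimes F\xrightarrow{\cong}V$ to extract a well-defined ideal class $[I]\in\Cl(F)$: via $\eta$ and $\omega$, the Tate module data of $A_s$ produces an $\mc{O}_{F}\otimes\widehat\ZZ^p$-lattice in $V_0\otimes F\otimes\AF^{p,\infty}$, and combining it with the standard lattice at $p$ gives a class in $\Cl(F)$ that is invariant under the $K_0$-orbit ambiguity (since $K_0$ stabilizes $\widehat L^p$). I would then choose a representative $I$ normalized by Lemma~\ref{lem:idealideal}, and produce $E/S$ by inverting Serre's tensor construction, for instance as the abelian scheme representing the functor $T\mapsto\Hom_{\mc{O}_F}(I,A(T))$. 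The signature hypothesis $\dim\widehat A_u=1$ combined with the $\mc{O}_F$-module structure forces $E$ to be one-dimensional, hence an elliptic scheme, and the canonical evaluation map $E\otimes I\to A$ is an $\mc{O}_F$-linear prime-to-$p$ quasi-isogeny; the choices $I\subseteq I^\vee$ and $I_{(p)}=\mc{O}_{F,(p)}$ arrange for it to intertwine $\lambda_I$ and $\lambda$ up to a $\ZZ_{(p)}^\times$-scalar. Transporting $[\eta]$ back through this quasi-isogeny and $\omega$ gives the required $GL_2(\widehat\ZZ^p)$-orbit $[\eta_E]$ on $E$, and by construction $\Phi_I(E,[\eta_E])\cong(A,i,\lambda,[\eta])$ in $\Sh(K_0)(S)$.

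The main technical obstacle is this inverse Serre tensor step: verifying that $\Hom_{\mc{O}_F}(I,A)$ is representable by an elliptic scheme and that the evaluation map $E\otimes I\to A$ is a prime-to-$p$ quasi-isogeny respecting the full structure. By faithfully flat descent, this should reduce to a module-theoretic computation on Tate modules at primes $\ell\ne p$, where $T_\ell(A)$ decomposes according to the prime factorization of $\ell$ in $\mc{O}_F$ and is locally free of rank one over $\mc{O}_F\otimes\ZZ_\ell$ up to the twist encoded by $I$, and on $p$-divisible groups at $p$, where the one-dimensionality of $A(u)$ is essential in forcing $E(p)$ to have height $2$. The Honda--Tate classification of Section~\ref{sec:Honda-Tate} provides independent verification at geometric special fibers in characteristic $p$, where every $(A,i)$ is already known to be $F$-linearly isogenous to some $E\otimes\mc{O}_F$; extending this isogeny-class statement to an isomorphism of structured objects over a general base is precisely what the Serre tensor argument accomplishes.
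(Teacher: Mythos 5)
Your high-level strategy is genuinely different from the paper's, and it is a natural one to try: establish full faithfulness (with the $\mc{O}_F^\times/\{\pm 1\}$-correction) from Lemmas~\ref{lem:aut} and \ref{lem:isoclass}, and then prove essential surjectivity directly by building an inverse to the Serre tensor construction over an arbitrary base. If the essential surjectivity step went through, your argument would in fact be cleaner and more self-contained than the paper's: the paper instead first proves $\Phi$ is a finite \'etale map onto a union of components (Lemma~\ref{lem:proper}, using deformation theory and properness via N\'eron models), then uses Serre--Tate lifting plus an isomorphism $\bar{\QQ}_p\cong\CC$ to reduce $\pi_0$-surjectivity to a statement over $\CC$, and finally identifies $\pi_0(\Sh(K_0)_\CC)$ adelically as $GU(\QQ)\setminus GU(\AF^\infty)/K_0$ and compares it to $\Cl(F)$ via Shimura's class number formula \cite[Thm.~5.24]{Shimura}.

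However, the essential surjectivity step as you've written it has a concrete gap: the functor $T\mapsto\Hom_{\mc{O}_F}(I,A(T))$ does not yield an elliptic scheme. Since $I$ is an invertible $\mc{O}_F$-module, $\Hom_{\mc{O}_F}(I,A(T))\cong I^{-1}\otimes_{\mc{O}_F}A(T)$, so this functor is represented by the $\mc{O}_F$-linear abelian \emph{surface} $A\otimes_{\mc{O}_F}I^{-1}$. The extreme case $I=\mc{O}_F$ makes this plain: $\Hom_{\mc{O}_F}(\mc{O}_F,A)=A$, which is not one-dimensional. The construction in the paper is $E\otimes_{\ZZ}I$ (tensor along the degree-two extension $\ZZ\to\mc{O}_F$), which \emph{doubles} dimension; to invert it one would need a canonical ``halving'' operation, and no such operation is available from the $\mc{O}_F$-module structure alone. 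Indeed, $E\otimes_{\ZZ}\mc{O}_F$ admits $E$ as a direct factor but not canonically — the choice of factor amounts to a choice of $\ZZ$-basis of $\mc{O}_F$ — and recovering $E$ would require extra data such as the complex conjugation acting on $A$ compatibly with the $\mc{O}_F$-action, which is not part of the moduli datum. More fundamentally, even if a correct recipe for $E$ were produced, you would still have to show that \emph{every} object of $\Sh(K_0)(S)$ is, up to prime-to-$p$ quasi-isogeny respecting polarization and level structure, of the form $E\otimes_{\ZZ}I$; this is exactly the global arithmetic input that the paper extracts from the adelic $\pi_0$ computation, and your appeal to Section~\ref{sec:Honda-Tate} only gives the statement at the level of bare $F$-linear isogeny classes over $\bar\FF_p$, not at the level of weakly polarized objects with level structure over a general base.
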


We remark that in the first two cases the Galois group acts trivially
on the underlying coarse moduli object, but acts via nontrivial
automorphisms on the points of the stack.  A generic point of the
stack $\mc{ M}_{\mit{ell},\ZZ_p}$ has automorphism group $\mb Z/2
\cong \mb Z^\times$, whereas a generic point of $\Sh(K_0)$ associated
to the quadratic imaginary field $F$ has automorphism group ${\mathcal
  O}_F^\times$.

The remainder of this section will be devoted to proving
Theorem~\ref{thm:etale}.  We will first establish the following weaker
version of Theorem~\ref{thm:etale}.

\begin{lem}\label{lem:proper}
$\quad$
\begin{enumerate}
\item If $F = \mb Q(i)$, the map $\mc{ M}_{\mit{ell},\ZZ_p} \to
  \Sh(K_0)$ is a degree 2 Galois cover of a connected component.
\item If $F = \mb Q(\omega)$, the map $\mc{ M}_{\mit{ell},\ZZ_p} \to
  \Sh(K_0)$ is a degree 3 Galois cover of a connected component.
\item In all other cases, the map $\coprod_{\Cl(F)} \mc{
M}_{\mit{ell},\ZZ_p} \to
  \Sh(K_0)$ is an inclusion of a set of connected components.
\end{enumerate}
\end{lem}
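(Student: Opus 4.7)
The plan is to combine three ingredients: (a) formal étaleness of each $\Phi_I$, obtained via Serre--Tate deformation theory; (b) the automorphism and isomorphism-class analyses of Lemmas~\ref{lem:aut} and~\ref{lem:isoclass}; and (c) a properness check that upgrades the resulting open image to a union of connected components.

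For étaleness, I would argue via Serre--Tate. A deformation of $\Phi_I(E, [\eta])$ over an Artinian local $\ZZ_p$-algebra $R$ is a deformation of the $p$-divisible group $(E \otimes I)(p)$ equipped with its $\mc{O}_F$-action, polarization, and prime-to-$p$ level structure. Because $I_p \cong \mc{O}_{F,p}$ under our standing choice of $I$, this $p$-divisible group splits canonically into $u$ and $\bar u$ summands, each canonically isomorphic to $E(p)$. The polarization $\lambda_I$ forces the $\bar u$ deformation to be the Serre dual of the $u$ deformation; the prime-to-$p$ level structure lifts uniquely along nilpotent thickenings; and Serre--Tate, applied to $E$, converts the $u$-deformation back to a deformation of $E$. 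This yields an equivalence of deformation groupoids, so $\Phi_I$ is formally étale, hence étale.

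Combining étaleness with Lemma~\ref{lem:aut}, which records the automorphism extension by $\mc{O}_F^\times / \{\pm 1\}$, and with Lemma~\ref{lem:isoclass}, which says distinct classes $[I]$ contribute disjoint essential images, one sees that $\coprod_{\Cl(F)} \Phi_I$ is a fully faithful étale morphism when $F \ne \QQ(i), \QQ(\omega)$ and hence an open immersion; in the two exceptional cases $\Cl(F)$ is trivial and $\Phi_I$ is an étale Galois cover of its image with group $C_2$ or $C_3$ respectively. It remains to show the image is closed in $\Sh(K_0)$. For this I would verify the valuative criterion for $\Phi_I$: given a DVR $R$ with fraction field $K$, an object $(E_K, [\eta_K])$ over $K$, and an extension of $\Phi_I(E_K, [\eta_K])$ to an object of $\Sh(K_0)(R)$, one must exhibit an elliptic curve $E/R$ extending $E_K$ whose image under $\Phi_I$ recovers this extension. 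The Néron mapping property supplies a unique extension $E/R$ of $E_K$, and uniqueness of extensions of abelian schemes over a DVR forces the given extension to be isomorphic to $\Phi_I(E, [\eta])$.

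The main obstacle, I expect, is executing this last step carefully: one must check that the $\mc{O}_F$-action, weak polarization, and $K_0$-level structure on the $R$-extension match those pulled back from $\Phi_I(E, [\eta])$ on the nose, rather than only up to isogeny. The lattice-theoretic rigidity built into $K_0 = \mathrm{Stab}(\widehat{L}^p)$ with $L = \mc{O}_F^2$ should suffice to rule out nontrivial isogeny discrepancies, but the bookkeeping --- tracking that the integral structures on $V^p(E \otimes I_s)$ induced by the $R$-extension coincide with those determined by the tensoring construction --- has to be set up carefully.
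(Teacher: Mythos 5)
Your proof follows essentially the same route as the paper's: formal \'etaleness from Serre--Tate deformation theory (the paper states this more compactly by observing that both moduli are locally universal deformations of the same $1$-dimensional $p$-divisible group), combined with Lemmas~\ref{lem:aut} and~\ref{lem:isoclass}, reduces the claim to properness, which both you and the paper verify via the valuative criterion using N\'eron models. One small point worth tightening in your last paragraph: the N\'eron property you should invoke is that of the \emph{given} $R$-extension $A$ (an abelian scheme is its own N\'eron model), from which the idempotents decomposing $A_K \cong E_K \times E_K$ extend to split $A$ as $E \times E$ with $E/R$ an elliptic scheme --- rather than appealing directly to the N\'eron model of $E_K$, which a priori need not be proper before good reduction has been established.
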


\begin{proof}
We know that these maps are \'etale, as both moduli are locally
universal deformations of the associated $p$-divisible groups.  Given
Lemmas~\ref{lem:aut} and \ref{lem:isoclass}, to
finish the justification of this statement, we must prove that these
maps are proper.

Suppose that we are given a discrete valuation ring $R$ over $(\mc{
  O}_F)_{(p)}$ with fraction field $K$, an elliptic curve $E$ over
$K$, and an extension of $E \otimes I$ to a polarized abelian variety
$A$ over $R$ with $\mc{ O}_F$-action serving as an $R$-point of the
moduli.  Then $A$ is a N\'eron model for $E \otimes I$, and in
particular the universal mapping property allows the direct product
decomposition $E \otimes I \cong E \times E$ to extend uniquely to
$A$, together with the $\mc{ O}_F$-action.  The same holds true for
$A^\vee$ and the polarization.
\end{proof}

We are left with showing that the map
$$ \Phi: \coprod_{\Cl(F)} \mc{ M}_{\mit{ell},\ZZ_p} \rightarrow
  \Sh(K_0) $$
is surjective on $\pi_0$.  Assume this is not true.  Then there is
a connected component $Y$ of $\Sh(K_0)$ that is disjoint from the image of
$\Phi$.  Since $\Sh(K_0)$ possesses an \'etale cover by a quasi-projective
scheme over $\Spec(\ZZ_p)$, it follows that $Y$ must have an
$\bar{\FF}_p$-point $y_0$.  By Serre-Tate theory, there exists a lift of
this point to a $\QQ_p^{nr}$-point $y$.
Choosing an isomorphism $\bar{\QQ}_p \cong \CC$, we
see that $y$ corresponds to a $\CC$-point of $\Sh(K_0)$ which is not in the
image of $\Phi$.  To arrive at a contradiction, and hence prove
Theorem~\ref{thm:etale}, it suffices to demonstrate that $\Phi$ is
surjective on $\CC$-points.  

Lemma~\ref{lem:proper} implies that $\Phi$
surjects onto a set of connected components of $\Sh(K_0)_\CC$.  Therefore
we simply must prove that the induced map
\begin{equation}\label{eq:pi0}
\Phi_*: \pi_0(\coprod_{\Cl(F)} \mc{M}_{\mit{ell},\CC}) \rightarrow
\pi_0(\Sh(K_0)_\CC) 
\end{equation}
is an isomorphism.  Since $\mc{M}_{\mit{ell},\CC}$ is connected, 
the left-hand-side of (\ref{eq:pi0}) is isomorphic to
$$ \Cl(F) = F^\times \setminus (\AF^\infty_F)^\times /
\widehat{\mc{O}}_F^\times $$
whereas
Theorem~9.3.5 and Remark~9.3.6 of \cite{taf} shows that the right-hand-side
is isomorphic to 
$$ GU(\QQ) \setminus GU(\AF^\infty) / K_0. $$
The map
\begin{equation}\label{eq:pi0Phi}
\pi_0\Phi: F^\times \setminus (\AF^\infty_F)^\times /
\widehat{\mc{O}}_F^\times \rightarrow
GU(\QQ) \setminus GU(\AF^\infty) / K_0
\end{equation}
induced by $\Phi$ under these isomorphisms is the map of adelic quotients
induced by the inclusion of the center $\mr{Res}_{F/\QQ} \GG_m$ of $GU$.
This can be seen as follows: given a fractional ideal $I \subset F$,
picking a generator $z_{\mf{p}}$ of $I_{\mf{p}}$ 
for each prime $\mf{p}$ of $F$, we get an element $(a_{\mf{p}}) \in
\AF_F^\infty$.  Given a complex elliptic curve $C = \CC/\Lambda$
associated to a lattice $\Lambda$ with basis $(e_1, e_2)$, 
the abelian variety $\Phi(C)$ is given
by $\Phi(C) = \CC^2/\Lambda'$ where $\Lambda'_{\mf{p}}$ has basis
$(a_{\mf{p}}e_1, a_{\mf{p}}e_2)$.

\begin{lem}
The map $\pi_0 \Phi$ of (\ref{eq:pi0Phi}) is an isomorphism.
\end{lem}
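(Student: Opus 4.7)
The plan is to identify both sides with $\Cl(F)$; once this is done, unraveling definitions shows the induced map is the identity. The reduction uses strong approximation for the derived group of $GU$, followed by an explicit analysis of the abelianization torus via Hilbert~90.

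First I would apply strong approximation to the derived group $SU := [GU, GU]$. Because the hermitian form on $V = F^2$ is hyperbolic over $\QQ$, the group $GU$ is $\QQ$-quasi-split, so $SU \cong SL_{2,\QQ}$ is a simply-connected semisimple $\QQ$-group with non-compact real points. Strong approximation then yields $SU(\AF^\infty) = SU(\QQ) \cdot (K_0 \cap SU(\AF^\infty))$, and consequently the abelianization quotient $\pi \co GU \twoheadrightarrow T := GU^{\mr{ab}}$ induces a bijection
\[
GU(\QQ) \backslash GU(\AF^\infty) / K_0 \xrightarrow{\cong} T(\QQ) \backslash T(\AF^\infty) / \pi(K_0).
\]

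Next I would identify $T$ and compute its adelic quotient. The map $(\nu, \det) \co GU \to \GG_m \times \mr{Res}_{F/\QQ}\GG_m$ exhibits $T$ as the subtorus $\{(y,x) : y^2 = N(x)\}$, which splits as $\GG_m \times \mr{Res}_{F/\QQ}\GG_m^1$ via $(y,x) \mapsto (y, x/y)$. Under this splitting the central map $Z \to T$ becomes $z \mapsto (N(z), z/\bar z)$. The $\GG_m$-factor of $T(\QQ) \backslash T(\AF^\infty) / \pi(K_0)$ collapses to $\Cl(\QQ) = 0$, while the $\mr{Res}_{F/\QQ}\GG_m^1$-factor, by adelic Hilbert~90 which identifies $F^1(\AF^\infty) \cong (\AF_F^\infty)^\times/(\AF^\infty)^\times$ through $z \mapsto z/\bar z$, reduces to
\[
(\AF_F^\infty)^\times \bigm/ \bigl( F^\times \cdot (\AF^\infty)^\times \cdot \widehat{\mc{O}}_F^\times \bigr) = (\AF_F^\infty)^\times \bigm/ \bigl( F^\times \widehat{\mc{O}}_F^\times \bigr) = \Cl(F),
\]
where the middle equality follows from the absorption $(\AF^\infty)^\times \subset F^\times \widehat{\mc{O}}_F^\times$. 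Under these identifications $\pi_0 \Phi$ becomes the identity of $\Cl(F)$, completing the argument.

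The main technical obstacle lies in the absorption step $(\AF^\infty)^\times \subset F^\times \widehat{\mc{O}}_F^\times$ and in verifying that $\pi(K_0)$ has the claimed image in each factor of $T$. The absorption follows from triviality of $\Cl(\QQ)$, which allows writing any $\QQ$-idele as $q \cdot v$ with $q \in \QQ^\times \subset F^\times$ and $v \in \widehat\ZZ^\times \subset \widehat{\mc{O}}_F^\times$; the image of $\pi(K_0)$ is handled by a direct computation of $\nu$ and $\det$ on the lattice stabilizer $K_0$, with local Hilbert~90 ensuring the resulting image matches the image of $\widehat{\mc{O}}_F^\times$ under the central inclusion.
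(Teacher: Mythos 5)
Your approach is genuinely different from the paper's. Where the paper invokes Shimura's class number formula for indefinite unitary similitude groups (\cite[Thm.~5.24]{Shimura}) to establish $h(GU) = h(F)$ and then combines this with injectivity, you propose an elementary reduction: strong approximation for $SU \cong SL_{2}$ (valid since $V$ is a hyperbolic hermitian plane, so $SU$ is split with a split maximal torus), passing to the abelianization $T = GU^{\mathrm{ab}} \cong \GG_m \times \mr{Res}_{F/\QQ}\GG^1_m$, then Hilbert 90. This avoids the class number formula entirely and, if completed, would also make transparent why the map induced by the central inclusion is the identity rather than merely a bijection. The reduction $\mathrm{Cl}(GU, K_0) \cong T(\QQ)\backslash T(\AF^\infty)/\pi(K_0)$ via strong approximation is correct and standard, and the identification of $T$ with $\GG_m \times F^1$ and of the central map $z \mapsto (N(z), z/\bar z)$ are fine.

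The gap is in the step you defer to a ``direct computation,'' namely that the $F^1$-component of $\pi(K_0)$ coincides with the image of $\widehat{\mc O}_F^\times$ under $u \mapsto u/\bar u$. This is not true of the lattice stabilizer on the nose. At a ramified prime $\ell \mid N$, the element $g = \begin{bmatrix} 0 & 1 \\ -1 & 0 \end{bmatrix}$ lies in $GU(\QQ_\ell)$ (with $\nu(g) = -1$, $\det g = 1$) and stabilizes $L_\ell = \mc O_{F,\ell}^2$, so $g \in K_{0,\ell}$; under your splitting its $F^1$-component is $\det g/\nu(g) = -1$, which is \emph{not} of the form $u/\bar u$ for $u \in \mc O_{F,\ell}^\times$ (a norm-one unit is of that form iff it lifts across the nontrivial class in $H^1(\Gal(F_\ell/\QQ_\ell), \mc O_{F,\ell}^\times) \cong \ZZ/2$, and $-1$ does not). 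More generally, using diagonal and antidiagonal elements one checks $\pi(K_{0,\ell})$ contains all of $\{w \in \mc O_{F,\ell}^\times : N(w)=1\}$ in the $F^1$-factor, which strictly contains the image of the center at each ramified place. If one takes $\pi(K_0)$ at face value the resulting class set would be $F^1(\QQ)\backslash F^1(\AF^\infty)/F^1(\widehat\ZZ)$, i.e., $\Cl(F)$ modulo the classes of ramified primes, not $\Cl(F)$ itself; that would contradict the injectivity established moduli-theoretically by Lemma~\ref{lem:isoclass}. So the resolution must lie in a more careful treatment of the $\pi_0$ double coset (e.g., the positivity constraint $\nu > 0$ coming from polarizations, or a more precise reading of \cite[Thm.~9.3.5]{taf}), which your sketch does not engage. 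As written, the ``absorption and Hilbert 90'' computation proves the wrong quantity unless this is fixed, whereas the paper's citation of Shimura's formula sidesteps the determination of $\pi(K_0)$ entirely.
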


\begin{proof}
The map is easily seen to be a monomorphism.  Thus it suffices to show that
$$
h(GU) := \vert GU(\QQ) \setminus GU(\AF^\infty) / K_0 \vert = h(F) 
$$
where $h(F)$ is the class number of $F$.  Shimura \cite[Thm.~5.24(ii)]{Shimura}
computed the class numbers of indefinite unitary similitude groups.  In our
particular case his formula gives
$$ h(GU) = h(\QQ)[\mf{C}: \mf{C}_0][\mf{E}, f(\mc{O}_F^\times)]. $$
Here $h(\QQ) = 1$ is the class number of $\QQ$.  The group $\mf{C}$ is
the class
group of $F$, and $\mf{C}_0$ denotes the subgroup generated by ideals which
are invariant under conjugation.  However, in
\cite[Thm.~5.24(i)]{Shimura}, Shimura argues that $[\mf{C}: \mf{C}_0]$ is
the class number for a unitary group associated to a hermitian form on an
odd dimensional $F$-vector space.  Specializing this result to the 
$1$-dimensional case, we deduce
$$ [\mf{C}:\mf{C}_0] = h(T) $$
where $h(T)$ is the class number of the torus
$$ T = \ker(N_{F/\QQ}: \mr{Res}_{F/\QQ} \GG_m \rightarrow \GG_m). $$
Just prior to \cite[Thm.~5.24]{Shimura}, the group $\mf{E}$ is defined, and
shown to be isomorphic to the product $(\ZZ/2)^v$, where $v$ denotes the
number of primes $\ell$ which ramify in $F$ for
which the corresponding local hermitian form has non-trivial anisotropic
subspace.  Since our hermitian form is actually isotropic, the 
the number $v$ is equal to $u$, the number of primes which ramify in $F$.
The map $f$ maps $\mc{O}_F^\times$ into $\mf{E}$ with kernel
$(\mc{O}_F^\times)^2$.  We therefore deduce that
$$ [\mf{E}:f(\mc{O}_F^\times)] = 2. $$
The class number $h(T)$ is given by
$$ h(T) = h(F)/2^{u-1}, $$
(see, for instance, p.375, Equation~(16) of
\cite{Shyr}) and this give the desired result.
\end{proof}

\section{Computation of $\TAF_{GU}(K_0)$}
\label{sec:elliptictensor}

In this section we continue to take $V$, $\bra{-,-}$, $GU$, and $K_0$ as in
Section~\ref{sec:tensor}.  In this section we give a complete description of the spectrum
$\TAF_{GU}$ for this choice of initial data.

Let $(\mbf{E}, [\pmb{\eta}])$ be the universal elliptic curve over
$\mc{M}_{\mit{ell}, \ZZ_p}$.  Let 
$$ \mc{E}_{GL_2} = \mc{E}_{\mbf{E}(p)} $$
be the sheaf of $E_\infty$-ring spectra 
associated to the $p$-divisible group $\mbf{E}(p)$.  The global sections
give the $p$-completion of the spectrum of topological modular forms:
$$ \TMF_p = \mc{E}_{GL_2}((\mc{M}_{\mit{ell}})^\wedge_p). $$
The pullback of the $p$-divisible group $\mbf{A}(u)$ associated to the
universal abelian scheme over $\Sh(K_0)$ under the map
$$ \Phi: \coprod_{\Cl(F)} \mc{M}_{\mit{ell}, \ZZ_p} \rightarrow \Sh(K_0)
$$
is given by
$$ \Phi^* \mbf{A}(u) \cong \mbf{E}(p). $$
We deduce that there is an isomorphism of presheaves
\begin{equation}\label{eq:presheafiso}
\nabla^* \mc{E}_{GL_2} \cong \Phi^* \mc{E}_{GU}
\end{equation}
where 
$$ \nabla: \amalg_{\Cl(F)} \mc{M}_{\mit{ell}, \ZZ_p} \rightarrow
\mc{M}_{\mit{ell}, \ZZ_p}
$$
is the codiagonal.   

\begin{thm}\label{thm:tensor}
We have the following equivalences of $E_\infty$-ring spectra.
\begin{enumerate}
\item If $F = \mb Q(i)$, there is an equivalence
$$ \TAF_{GU}(K_0) \simeq \TMF_p^{hC_2}. $$
\item If $F = \mb Q(\omega)$, there is an equivalence
$$ \TAF_{GU}(K_0) \simeq \TMF_p^{hC_3}. $$
\item In all other cases, we have
$$ \TAF_{GU}(K_0) \cong \prod_{\Cl(F)} \TMF_p. $$
\end{enumerate}
\end{thm}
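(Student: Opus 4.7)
The plan is to invoke \'etale (hyper)descent for the Lurie sheaf $\mc{E}_{GU}$ and combine it with Theorem~\ref{thm:etale} and the presheaf isomorphism $\Phi^* \mc{E}_{GU} \cong \nabla^* \mc{E}_{GL_2}$ from~(\ref{eq:presheafiso}). This will reduce each of the three cases to a computation of global sections of $\mc{E}_{GL_2}$ over (a cover of) $\mc{M}_{\mit{ell},\ZZ_p}^\wedge_p$, which is by construction $\TMF_p$.

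For case (3), since Theorem~\ref{thm:etale}(3) identifies $\Phi$ as an equivalence of Deligne-Mumford stacks, taking global sections of~(\ref{eq:presheafiso}) over $\Sh(K_0)^\wedge_p$ gives directly
\[
\TAF_{GU}(K_0) \simeq \nabla^* \mc{E}_{GL_2}\Bigl(\coprod_{\Cl(F)} \mc{M}_{\mit{ell},\ZZ_p}^\wedge_p\Bigr) \simeq \prod_{\Cl(F)} \TMF_p,
\]
using the fact that a sheaf of spectra on a disjoint union of Deligne-Mumford stacks has sections computed as a product.

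For cases (1) and (2), Theorem~\ref{thm:etale}(1)-(2) identifies $\Phi\co \mc{M}_{\mit{ell},\ZZ_p} \to \Sh(K_0)$ as a Galois \'etale cover of Deligne-Mumford stacks with Galois group $G = \mc{O}_F^\times / \{\pm 1\}$, which is $C_2$ when $F = \QQ(i)$ and $C_3$ when $F = \QQ(\omega)$. Applying \'etale hyperdescent for the Jardine-fibrant sheaf $\mc{E}_{GU}$ to the cover $\Phi$, together with~(\ref{eq:presheafiso}), then yields
\[
\TAF_{GU}(K_0) = \mc{E}_{GU}(\Sh(K_0)^\wedge_p) \simeq \bigl(\Phi^* \mc{E}_{GU}(\mc{M}_{\mit{ell},\ZZ_p}^\wedge_p)\bigr)^{hG} \simeq \TMF_p^{hG},
\]
where the $G$-action on $\TMF_p$ is the one determined by the deck transformations of $\Phi$, realized at a point $(E,\eta)$ by the extra automorphisms $1 \otimes z$ of $\Phi(E,\eta)$ for $z \in \mc{O}_F^\times$ representing a nontrivial class in $G$, as catalogued in Lemma~\ref{lem:aut}.

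The main obstacle I anticipate is twofold. First, one must verify that the Lurie-Goerss-Hopkins-Miller sheaf $\mc{E}_{GU}$ genuinely satisfies descent along Galois covers of Deligne-Mumford stacks, so that sections over the quotient stack are correctly computed as homotopy fixed points; this is a standard consequence of the Jardine-fibrant property of $\mc{E}_\GG$ as a sheaf on the \'etale site. Second, one must confirm that the $G$-action on $\TMF_p$ arising from deck transformations is nontrivial, despite the fact noted in the remark after Theorem~\ref{thm:etale} that $G$ acts trivially on the underlying coarse moduli: the action is carried by the automorphisms $1 \otimes z$ of objects $\Phi(E,\eta)$ rather than by motion on the coarse moduli, so it manifests in the descent data for $\Phi^* \mc{E}_{GU}$ even though each generator of $G$ induces the identity functor on $\mc{M}_{\mit{ell},\ZZ_p}$.
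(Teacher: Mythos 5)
Your proof is correct and follows essentially the same route as the paper: case (3) by taking global sections of the presheaf isomorphism~(\ref{eq:presheafiso}) over the equivalence $\Phi$, and cases (1)--(2) by invoking Galois descent for the Jardine-fibrant sheaf $\mc{E}_{GU}$ along the cover $\Phi$ to obtain homotopy fixed points. The paper's proof is just a terser statement of the same argument; your additional remarks about verifying descent and about why the $G$-action is nontrivial despite acting trivially on the coarse moduli are accurate elaborations of what the paper leaves implicit.
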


\begin{proof}
(3) follows immediately from applying the global sections functor to
Equation~\ref{eq:presheafiso}.  (1) and (2) are established by noting that
since the presheaf $\mc{E}_{GU}$ is Jardine fibrant, it satisfies descent
with respect to the Galois cover $\Phi$.
\end{proof}

We pause to give a precise description of the group actions of
Theorem~\ref{thm:tensor} on the spectrum $\TMF_p$.  In
\cite[1.2.1]{markmodular}, the first author described certain operations
$$ \psi^*_{[k]}: \TMF_p \rightarrow \TMF_p $$
for $k$ coprime to $p$ which are analogs of the Adams operations on
$K$-theory.  These operations give an action of $\ZZ_{(p)}^\times$ on
$\TMF_p$.
The functoriality of the sheaf $\mc{E}_{\GG}$ with respect to the
$p$-divisible group $\GG$ implies that the central action of
$\ZZ_p^\times$ on the $p$-divisible group $\GG$ by isomorphisms induces an
extension of the $\ZZ_{(p)}^\times$-action on $\TMF_p$ to $\ZZ_p^\times$. 
The action factors through $\ZZ_p^\times/\{ \pm 1 \}$, since for any
elliptic curve $E$, the isogeny
$$ [-1]: E \rightarrow E $$
is an isomorphism.  Since $p$ is assumed to split in $F$, there is an
inclusion 
$$ \mc{O}_F^\times \hookrightarrow \mc{O}_{F,u}^\times \cong \ZZ_p^\times $$
and hence an action of $\mc{O}_F^\times/\{ \pm 1\}$ on $\TMF_p$.

\begin{cor}
\label{cor:tensor}
The homotopy groups of the 
spectra of topological automorphic forms $\TAF_{GU}(K_0)$ are computed as
follows.
\begin{enumerate}
\item If $F = \mb Q(i)$, there is an isomorphism
$$ \TAF_{GU}(K_0)_* \cong 
\mb Z_p[c_4,c_6^2,\Delta^{-1}]^\wedge_p \subset \ZZ_p[c_4, c_6,
\Delta^{-1}]^\wedge_p. $$
\item If $F = \mb Q(\omega)$, 
there is an isomorphism
$$ \TAF_{GU}(K_0)_* \cong \mb Z_p[c_4^3,c_6,\Delta^{-1}]^\wedge_p \subset 
\ZZ_p[c_4, c_6, \Delta^{-1}]^\wedge_p. $$
\item In all other cases, there is an isomorphism
$$ \TAF_{GU}(K_0)_* \cong \prod_{\Cl(F)} \pi_* \TMF_p. $$
\end{enumerate}
Here, $c_4$, $c_6$, and $\Delta$ are the standard integral modular forms.
\end{cor}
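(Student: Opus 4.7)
The plan is to derive each case directly from Theorem~\ref{thm:tensor} by passing to homotopy groups. Part (3) is immediate: applying $\pi_*$ to the product equivalence $\TAF_{GU}(K_0) \simeq \prod_{\Cl(F)} \TMF_p$ yields the displayed isomorphism, since $\pi_*$ preserves finite products.

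For parts (1) and (2) I would analyze the homotopy fixed point spectral sequence
\[
E_2^{s,t} = H^s(C_n; \pi_t \TMF_p) \Longrightarrow \pi_{t-s} \TMF_p^{hC_n}
\]
with $n = 2$ and $n = 3$ respectively. The crucial observation is that the splitting hypothesis on $p$ forces $p$ odd when $F = \QQ(i)$ and $p \neq 3$ when $F = \QQ(\omega)$; in both cases $|C_n|$ is a unit in $\ZZ_p$, so the higher group cohomology of $C_n$ with $\ZZ_p$-module coefficients vanishes. The spectral sequence therefore collapses onto its $s = 0$ line, giving $\pi_* \TAF_{GU}(K_0) \cong (\pi_* \TMF_p)^{C_n}$.

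To finish I would identify the action explicitly. Using the description preceding the corollary, a generator $\zeta$ of $C_n = \mc{O}_F^\times/\{\pm 1\}$ (take $\zeta = i$ in case (1) and $\zeta = -\omega$ in case (2)) maps via $\mc{O}_F^\times \hookrightarrow \mc{O}_{F,u}^\times \cong \ZZ_p^\times$ to a primitive $n$-th root of unity, and acts on $\TMF_p$ as $\psi^*_{[\zeta]}$. Since this operation is induced by functoriality of $\mc{E}_\GG$ applied to the self-isogeny $[\zeta]$ of the universal $p$-divisible group, it scales the invariant differential by $\zeta$ and hence scales a weight-$w$ modular form, viewed as an element of $\pi_{2w} \TMF_p$ corresponding to a section of the $w$-th power of the Hodge line bundle, by $\zeta^w$. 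The invariants can then be read off: for $\zeta = i$ the eigenvalues on $c_4, c_6, \Delta$ are $1, -1, 1$, yielding $\ZZ_p[c_4, c_6^2, \Delta^{-1}]^\wedge_p$; for $\zeta = -\omega$ the eigenvalues are $\omega, 1, 1$, yielding $\ZZ_p[c_4^3, c_6, \Delta^{-1}]^\wedge_p$. The most delicate step is justifying that $\psi^*_{[\zeta]}$ acts on a weight-$w$ form by multiplication by $\zeta^w$; this amounts to matching the $\ZZ_p^\times$-action coming from the center of $GU$ with the weight grading on modular forms, and follows from naturality of $\mc{E}_\GG$ with respect to the multiplication-by-$\zeta$ self-map of $\GG$.
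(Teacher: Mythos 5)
Your proof is correct and takes essentially the same approach as the paper: pass to $\pi_*$ directly in case (3), and in cases (1)--(2) use the homotopy fixed point spectral sequence, observe that the splitting of $p$ forces $|C_n|$ invertible in $\ZZ_p$ so the spectral sequence collapses, and identify the action of a root of unity $\zeta \in \mc{O}_F^\times$ on a weight-$w$ form as multiplication by $\zeta^w$ (via its effect on invariant $1$-forms of the $p$-divisible group). One tiny omission: you invoke $\pi_* \TMF_p \cong \ZZ_p[c_4, c_6, \Delta^{-1}]^\wedge_p$ without explicitly noting that the splitting hypothesis actually forces $p > 3$ in both cases (not merely $p$ odd, resp.\ $p \neq 3$), which is what makes that description of $\pi_* \TMF_p$ valid; the paper records this as ``$p$ does not divide $6$.''
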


\begin{proof}
Case (3) follows immediately from Theorem~\ref{thm:tensor}.  Suppose we are
in cases (1) or (2), and that $G$ is the Galois group of the cover $\Phi$,
so that $G$ is either $C_2$ or $C_3$, respectively.
Note that since $p$ is assumed to split in $F$, cases (1) and (2) of
Theorem~\ref{thm:tensor} only
occur when the prime $p$ does not divide $6$.  Therefore, the
associated homotopy fixed point spectral sequence 
$$ H^s(G, \pi_t \TMF_p) \Rightarrow \pi_{t-s} \TAF_{GU}(K_0) $$
collapses to the $0$-line.  
Since $p$ does not divide $6$, we have
$$ \pi_* \TMF_p \cong \ZZ_p[c_4, c_6, \Delta^{-1}]^\wedge_p. $$
The proof of the corollary is completed by identifying the action of the
group $G$.
The $p$-divisible summand $A(u)$ is $E(p) \otimes_{\mb Z_p}
\comp{I}_u$, with $\mc{ O}_F$ acting via its image in $\comp{(\mc{
O}_F)}_u = \mb Z_p$.  Therefore, the roots of unity in $\mc{
O}_F^\times$ act on the $p$-divisible group (and hence on invariant 
1-forms in the formal part) by multiplication by roots of
unity in $\mb Z_p^\times$.  As a result, the action on forms of weight
$k$ is through the $k$'th power map.  Thus, in case (1), $i$ acts trivially on 
$c_4$ and by negation on $c_6$; in case (2), $\omega$
acts trivially on $c_6$ and by multiplication by $\omega$ on $c_4$.
\end{proof}

\begin{rmk}
The existence of these summands of $\TMF$ can be derived directly.
The previously described action of $\mb Z_p^\times$ on the
$p$-divisible group of $\mc{ M}_{ell,\mb Z_p}$ gives rise to an
action by a group of $(p-1)$'st roots of unity $\mu_{p-1} \subset
\mb Z_p^\times$ via scalar multiplication, and hence this group acts
on the associated spectrum $\TMF_p$.  The invariants under this
action form a summand analogous to the Adams summand, generalizing
the summands for $\mb Q(i)$ and $\mb Q(\omega)$.
\end{rmk}

\section{A quotient construction}\label{sec:quotient}

Let $\mf{d} \subset F$ be the different of $F$,  
let  
$L'$ be the $\mc{O}_F$-lattice 
$$ L' = \mc{O}_F \oplus \mf{d}^{-1} \subset F^2 = V. $$
Let 
$K_1$ denote
the compact open subgroup of $GU(\AF^{p,\infty})$ given by
$$ K_1 = \{ g \in GU(\AF^{p,\infty}) \: : \: g(\widehat{L'}^p) =
\widehat{L'}^p \}. $$
The significance of the lattice $L'$ is that, unlike the lattice $L$ of
Section~\ref{sec:tensor}, the lattice $L'$ is self-dual with respect to
$\bra{-,-}$, in the sense that we have
$$ L' = \{ x \in V \: : \: \bra{x, L'} \subseteq \ZZ \}. $$
This implies that the associated Shimura stack $\Sh(K_1)$ admits a moduli
interpretation
where all of the points are represented by principally polarized abelian
schemes with complex multiplication by $F$ (see Remark~\ref{rmk:principal}).  
This should be contrasted with
the moduli interpretation of $\Sh(K_0)$ developed in
Section~\ref{sec:tensor}, where none of the polarized abelian schemes $(E
\otimes I, \lambda_I)$ were principally polarized.

In this section we will identify one connected component of the 
associated Shimura variety $\Sh(K_1)$
with the 
$p$-completion of a quotient of the moduli stack of elliptic curves with
$\Gamma_0(N)$-structure.  Here, as always in this paper, $F = \QQ(-N)$,
where $N$ is a positive square-free integer relatively prime to $p$.

We recall that any elliptic curve $E$ has a canonical principal polarization
$\lambda$, and each isogeny $f$ of elliptic curves has a dual
$f^\vee = \lambda^{-1} f^\vee \lambda$.  The composite $f^\vee f$ is
multiplication by the degree of the isogeny.

In \cite[Sec.~6.4]{taf}, the authors observed that the moduli
interpretation of $\Sh(K_1)$ as a moduli stack of polarized $F$-linear abelian
schemes up to \emph{isogeny} with level structure could be replaced with a 
moduli
interpretation as a moduli stack of polarized abelian schemes up to
\emph{isomorphism} without level structure.  

Specifically,  for a
locally noetherian connected $\ZZ_p$-scheme $S$,
the $S$-points of $\Sh(K_1)$ is the groupoid whose objects are tuples
$(A,i,\lambda)$, with:
\begin{center}
\begin{tabular}{lp{19pc}}
$A$, & an abelian scheme over $S$ of dimension $2$,\\
$\lambda\co A \rightarrow A^\vee$, & a $\ZZ_{(p)}$-polarization, \\
$i\co \mc{O}_{F,(p)} \hookrightarrow \End(A)_{(p)}$, & an inclusion of
rings, such that the $\lambda$-Rosati involution is compatible with
conjugation.
\end{tabular}
\end{center}
subject to the following two conditions:
\begin{enumerate}
\item the coherent sheaf of $\mc{O}_S$-modules 
$\Lie A \otimes_{\mc{O}_{F,p}} \mc{O}_{F,u}$ is
locally free of rank $1$,

\item for a geometric point $s$ of $S$, there exists 
a $\pi_1(S,s)$-invariant
$\mc{O}_F$-linear similitude: 
$$ \eta\co (\widehat{L'}^p, \bra{-,-}) \xrightarrow{\cong}
(T^p(A_s), \bra{-,-}_\lambda). $$
(Here, $T^p(A_s)$ is the Tate module of $A_s$ away from $p$.)
\end{enumerate}

The morphisms 
$$ (A, i, \lambda) \rightarrow (A', i', \lambda') $$
of the groupoid of $S$-points of $\Sh(K_1)$ are isomorphisms 
of abelian schemes 
$$ \alpha\co A \xrightarrow{\simeq} A' $$
such that
\begin{alignat*}{2}
\lambda & = r\alpha^\vee \lambda' \alpha, 
\quad && 
r \in \ZZ_{(p)}^\times, \\
i'(z)\alpha & = \alpha i(z), 
\quad &&
z \in \mc{O}_{F,(p)}.
\end{alignat*}

\begin{rmk}\label{rmk:principal}
Observe that the tuple $(A,i,\lambda)$ only depends on the weak
polarization class of $\lambda$.  There is a unique similitude class of
$\widehat{\mc{O}}^p_F$-lattice in $V^{p,\infty}$ which is self-dual.
Therefore, Condition~(2) above is \emph{equivalent} to the condition that
the weak polarization class of $\lambda$ contains a representative which
is principal.  We may and will restrict ourselves to principal
polarizations in this section.
\end{rmk}

Let $\mc{ M}_0(N)$ denote the moduli stack (over $\ZZ[1/N]$) whose
$S$-points are the groupoid whose objects are pairs $(E,H)$ where 
$E$ is a (nonsingular) elliptic scheme over $S$,  and $H \le E$ is a
$\Gamma_0(N)$-structure, i.e. a cyclic subgroup of order $N$.  The
morphisms of the groupoid of $S$-points consist of isomorphisms of elliptic
curves which preserve the level structure.
Note that we have $\mc{M}_{\mit{ell}} = \mc{M}_0(1)$.  We may interpret the
$S$-points
of this moduli as being isogenies $q\co E \to \bar E$ of elliptic
curves whose kernel is cyclic of order $N$.

We will construct a morphism
\begin{align*}
\Phi': \mc{M}_0(N)_{\ZZ_p} & \rightarrow \Sh(K_1), \\
(E, H) & \mapsto (\Phi'(E), i_{E}, \lambda_E).
\end{align*}
We break the construction down into two cases.

\subsection*{Case I: $-N \equiv 2,3 \mod 4$}

In this case, the ring of integers is given by
$$ \mc{ O}_F \cong \mb Z[x]/(x^2 +
N). $$
We define our abelian scheme to be 
$$ \Phi(E) := E \times \bar E, $$
with polarization
the component-wise principal polarization 
$$ \lambda_E = \lambda \times \lambda. $$  
We
define a complex multiplication 
$$ i_E: \mc{ O}_F \to \End(E \times \bar E) $$ 
by the map
\[
x \mapsto \tau \circ (q,-q^\vee).
\]
Here, $\tau$ is the twist map $\bar{E} \times E \to E \times \bar{E}$.  
The dual
of this element is
\[
x^\vee = (q^\vee, -q) \circ \tau = \tau \circ (-q,q^\vee) = -x,
\]
so the Rosati involution induces complex conjugation on $\mc{ O}_F$.

As $p$ splits in $F$, let $a \in \mb Z_p^\times$ be the image of $x$
corresponding to the prime $u$, satisfying $a^2 + N = 0$.  The
canonical rank $1$ summand of the coherent sheaf $\Lie (E \times \bar{E})$ 
is the
image of $\Lie E$ under the map
\[
\left(1 \times \frac{q}{a}\right) \circ \Delta\co \Lie E \to \Lie (E \times
\bar E).
\]

\subsection*{Case II: $-N \equiv 1 \mod 4$}

In this case, the ring of integers $\mc{ O}_F$ is $\mb Z[y]/(y^2 + y
+ \frac{N+1}{4})$.  We would like to define our abelian variety as in
Case I; however, this definition would not allow an
action of the full ring of integers $\mc{ O}_F$.

Instead, we take 
$$ \Phi'(E) = \frac{E \times \bar{E}}{E[2]} $$
where we have taken quotients by the image of the composite
$$ E[2] \xrightarrow{\Delta} E[2] \times E[2] \hookrightarrow E \times
E \xrightarrow{1 \times q} E \times \bar{E}. $$
(Note that in this case, the kernel of $q$ has order prime to $2$.)
The polarization
$$ 2\lambda \times 2\lambda : E \times \bar{E} \rightarrow E^\vee \times
\bar{E}^\vee $$
descends to the quotient to give a principal polarization
$$ \lambda_E: \Phi'(E) \rightarrow \Phi'(E)^\vee. $$
The action of the order 
$$ \ZZ + \ZZ(2y) \subset F $$
on $E \times \bar{E}$ given in Case I 
descends to the quotient to give complex multiplication
$$ i_E: \mc{O}_F \rightarrow \End(\Phi'(E)) $$
over the resulting quotient.

We note that the endomorphism
$$ \begin{bmatrix}2&0\\-q&1\end{bmatrix} \in \End(E \times \bar{E}) $$
factors through the quotient $\Phi'(E)$ to give an isomorphism
$$ \Phi'(E) \xrightarrow{\cong} E \times \bar{E}. $$
We will use this to display explicit formulas.

On $E \times \bar E$, the induced polarization is defined by
\[
\lambda_E = (\lambda,\lambda) \circ A = (\lambda,\lambda) \circ 
\begin{bmatrix}
\frac{1+N}{2} & q^\vee \\
q & 2
\end{bmatrix}\co E \times \bar E \to E^\vee \times \bar E^\vee
\]
The matrix $A$ is symmetric with respect to transpose dual $\dag$, 
and is positive definite, as required to define a polarization.

We define complex multiplication 
$$ i_E: \mc{ O}_F \to \End(E \times \bar E) $$ 
by
\[
y \mapsto 
\begin{bmatrix}
\frac{-N-1}{2} & q^\vee \\
\frac{N+1}{4}q & \frac{N-1}{2}
\end{bmatrix}.
\]
This endomorphism satisfies the equation $y^2 + y + \frac{N+1}{4}$
and conjugate-commutes with the polarization.  This last follows from
the identity $y^\dag A = A(-1-y)$, or
\[
\begin{bmatrix}
\frac{-N-1}{2} & \frac{N+1}{4}q^\vee \\
q & \frac{N-1}{2}
\end{bmatrix}
\begin{bmatrix}
\frac{1+N}{2} & q^\vee \\
q & 2
\end{bmatrix}
=
\begin{bmatrix}
\frac{1+N}{2} & q^\vee \\
q & 2
\end{bmatrix}
\begin{bmatrix}
\frac{N-1}{2} & -q^\vee \\
\frac{-N-1}{4}q & \frac{-N-1}{2}
\end{bmatrix}.
\]

As in the previous case, one can check that the summand of $\Lie \Phi'(E)$
is canonically isomorphic to the rank 1 summand of $\Lie (E
\times \bar E)$.  

\subsection*{Isomorphisms of objects}
We now consider when two such abelian varieties $(E \times \bar E)$
and $(E' \times \bar E')$ can become isomorphic in the moduli.  This
proceeds in a way similar to Section~\ref{sec:tensor}.

We note that after rationalizing $\Hom$-sets, both cases become
isomorphic to the abelian variety $E \times \bar E$ as in Case~I.  The
rationalized set of maps $E \times \bar E \to E' \times \bar E'$ is
the set of matrices
\[
\left\{\begin{bmatrix}
\alpha & \beta q^\vee \\
q \gamma & q \delta q^\vee
\end{bmatrix}\ \Big | \ \alpha, \beta, \gamma, \delta \in \Hom(E,E')
\otimes \mb Q\right\}.
\]
The set of such elements that commute with
$\begin{bmatrix}0& -q^\vee \\q & 0\end{bmatrix}$ is
\[
\left\{\begin{bmatrix}
\alpha & \beta q^\vee \\
-q \beta & \frac{1}{N}q \alpha q^\vee
\end{bmatrix}\ \Big | \ \alpha, \beta \in \Hom(E,E')
\otimes \mb Q\right\}.
\]
The set of such elements $f$ that preserve the polarization, i.e. such
that $f^\dag f$ is scalar, are the elements satisfying $\alpha^\vee
\beta = \beta^\vee \alpha$, or $\alpha^\vee \beta$ is symmetric.  As
in Section \ref{sec:tensor}, the only symmetric endomorphisms
of an elliptic curve are scalar.  We find that there is an isogeny
$\phi$ such that $\alpha = a\phi$ and $\beta = b\phi$ for some
endomorphism $\phi$ and some scalars $a,b$.  In order for $f$ to
additionally be an {\em isomorphism}, we must have $f^\vee f = 1$,
implying $\alpha^\vee \alpha + (q\beta)^\vee (q\beta) = 1$, and hence
\[
\deg(\alpha) + \deg(q\beta) = 
\deg(a\phi) + N \deg(b\phi) = 1.
\]

In Case I, as the $\Hom$-set embeds into its rationalization, we must
have such a $2\times2$ matrix whose entries are genuine homomorphisms.
As $g^\vee g$ is the degree of the isogeny $g$, this can only
occur if one of $\alpha, q\beta$ is an isomorphism and the other is
zero.

In Case II, we note that since the abelian variety differs from $E
\times \bar E$ in this expression by a subgroup of $2$-torsion, the
entries of the matrix may not be genuine homomorphisms, but
multiplying any of them by $2$ is.  Therefore, we find that $2\alpha$
and $2q\beta$ are homomorphisms with $\deg(2\alpha) + \deg(2q\beta)
= 4$.  There are only the following
possibilities:
\begin{enumerate}
\item $\deg(2\alpha) = 4$, $\deg(q\beta) = 0$.  Such elements come from
  isomorphisms $E \to E'$ which respect the level structure.
\item $\deg(\alpha) = 0$, $\deg(2q\beta) = 4$.  Such elements are
  compositions of isomorphisms $E \to E'$ with the ``twist'' map
  $(1,-1) \circ \Delta\co E \times \bar E \to \bar E \times E$.  
\item $\deg(2\alpha) = 2$, $\deg(2q\beta) = 2$.  This would force $N =
  1$, which is not in case II.
\item $\deg(2\alpha) = 1$, $\deg(2q\beta) = 3$.  This forces $N = 3$,
  and such elements are compositions of isomorphisms $E \to E'$ with
  the matrix
  $\begin{bmatrix}-\frac{1}{2}&\frac{1}{2}q^\vee\\-\frac{1}{2}q&-\frac{1}{2}\end{bmatrix}$
  from $E \times \bar E$ to itself.  This element is precisely a third
  root of unity from $\mc{ O}_F^\times$.
\item $\deg(2\alpha) = 3$, $\deg(2q\beta) = 1$.  This forces $N = 3$,
  and such elements are compositions of isomorphisms $E \to E'$, the
  twist map $E \times \bar E \to \bar E \times E$, and matrices of
  the previous type.
\end{enumerate}

Let
$$ w: \mc{M}_0(N) \rightarrow \mc{M}_0(N) $$
be the Fricke involution, which on $S$-points is given by
$$ (E, q) \mapsto (\bar{E}, q^\vee). $$
Clearly, $w^2 = \mr{Id}$.  Let $\mc{M}_0(N) \mmod \bra{w}$ denote the stack
quotient by the action of $w$.

Observe that the map
$$ 
\tau' = 
\begin{bmatrix}
0 & -1 \\
1 & 0
\end{bmatrix}
: E \times \bar{E} \xrightarrow{\cong} \bar{E} \times E. $$
induces an isomorphism between the points $\Phi'(E,q)$ and $\Phi'(\bar{E},
q^\vee)$ of $\Sh(K_1)$.
Therefore, the map $\Phi'$ factors through the quotient by the Fricke
involution to give a map
$$ \Phi': \mc{M}_0(N)_{\ZZ_p}\mmod\bra{w} \rightarrow \Sh(K_1). $$

\begin{rmk}\label{rmk:Q(i)}
One must treat the case where $N = 1$ as exceptional, where 
$$ E \times \bar{E} = E \times E $$
has complex multiplication by $\mc{O}_F = \ZZ[i]$, and the isomorphism $\tau'$
corresponds to the action by $i$.  The Fricke involution is the
identity on the moduli, because there is no level structure, but the
rescaled involution on the $p$-divisible group is nontrivial.  The
compact open subgroup
$K_1$ is actually conjugate to $K_0$ in this case, so there is an
isomorphism of Deligne-Mumford stacks
$$ \Sh(K_1) \cong \Sh(K_0). $$
(This is the only case where the subgroups $K_0$ and $K_1$ are conjugate.)
\end{rmk}

\begin{rmk}
The natural $1$-dimensional summand of the $p$-divisible group, and
similarly the Lie algebra, of $\Phi'(E)$ are identified with the
$p$-divisible group and Lie algebra of $E$.  Under this
identification, the Lie algebra of $w(E,q) = (\bar E, q^\vee)$ is
identified with that of $E$ via a rescaling of the isogeny
\[
q\co \Lie E \to \Lie \bar E
\]
by dividing by $a = \sqrt{-N} \in \mb Z_p$.
\end{rmk}

\begin{thm}\label{thm:quotient}
$\quad$
\begin{enumerate}
\item If $F = \mb Q(i)$, then the  
map 
$$ \Phi': \mc{M}_0(1)_{\ZZ_p} = \mc{ M}_{\mit{ell},\ZZ_p} \to
  \Sh(K_1) $$ 
is a degree 2 Galois cover.
\item If $F = \mb Q(\omega)$, the map 
$$ \Phi': \mc{ M}_0(3)_{\ZZ_p} \mmod \bra{w} \to
  \Sh(K_1) $$ 
is a degree 3 Galois cover onto a connected component.
\item In all other cases, the map 
$$ \Phi': \mc{M}_0(N)_{\ZZ_p} \mmod \bra{w} \to
  \Sh(K_1) $$ 
is an inclusion of a connected component.
\end{enumerate}
\end{thm}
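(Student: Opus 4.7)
The proof will mirror the argument for Theorem~\ref{thm:etale}. The three pieces to establish are: that $\Phi'$ is étale, that $\Phi'$ is proper, and that $\Phi'$ realizes the claimed identifications of objects and automorphisms while landing on a single connected component.

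For the identifications, most of the heavy lifting has already been carried out in the paragraphs preceding the theorem. The enumeration of matrices
$$
\begin{bmatrix}\alpha & \beta q^\vee \\ -q\beta & \tfrac{1}{N}q\alpha q^\vee\end{bmatrix}
$$
commuting with the complex multiplication and preserving the polarization, refined by requiring the matrix to be an honest (not merely rational) isomorphism, leaves exactly two possibilities when $N \ne 1,3$ (items (1) and (2) of the list): an isomorphism of $\Gamma_0(N)$-structures $(E,q) \to (E',q')$, or such an isomorphism precomposed with the twist $\tau'$. Since $\tau'$ realizes the isomorphism $\Phi'(E,q) \cong \Phi'(\bar{E},q^\vee) = \Phi'(w(E,q))$, the functor $\Phi'$ descends to a fully faithful functor on $\mc{M}_0(N)_{\ZZ_p}\mmod\bra{w}$. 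When $N = 3$, items (4) and (5) contribute extra matrices that are precisely the non-identity elements of $\mc{O}_F^\times/\{\pm 1\} \cong C_3$, so $\Phi'$ factors through a $C_3$-Galois cover. The case $N = 1$ is handled by Remark~\ref{rmk:Q(i)}, which identifies $\Sh(K_1)$ with $\Sh(K_0)$ and reduces the claim to Theorem~\ref{thm:etale}(1).

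For étaleness, I would invoke that both source and target are locally universal deformations of their associated $1$-dimensional $p$-divisible groups; the identification of the one-dimensional summand of $\Phi'(E)(u)$ with $E(p)$, noted after each of Case I and Case II, then shows that $\Phi'$ induces an isomorphism on formal neighborhoods of geometric points. For properness, I would repeat the Néron-model argument of Lemma~\ref{lem:proper}: a DVR-extension of $\Phi'(E,q)$ equipped with its polarization and $\mc{O}_F$-action is a Néron model, and the $\mc{O}_F$-action canonically splits it (in Case I directly, in Case II after the $E[2]$-isogeny) as $\mc{E} \times \bar{\mc{E}}$; the $\mc{O}_F$-action together with the polarization then recovers the extension of the cyclic isogeny $q$, producing a $\Gamma_0(N)$-structure on $\mc{E}$.

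The main obstacle is packaging the isomorphism analysis cleanly in Case II, where the possible $E[2]$-quotient forces the matrix entries of isomorphisms to be only ``half-integral.'' One must argue carefully that the degree identity $\deg(2\alpha) + \deg(2q\beta) = 4$ genuinely restricts to the five listed possibilities, and crucially that cases (4) and (5) arise only when $N = 3$. Once this is in place, geometric connectedness of $X_0(N)$, and hence of $\mc{M}_0(N)_{\ZZ_p}\mmod\bra{w}$, guarantees that the image lies in a single connected component, completing the theorem.
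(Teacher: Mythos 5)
Your proposal follows the paper's own proof essentially verbatim: you reuse the pre-theorem Case~I/Case~II enumeration of isomorphisms to establish fully-faithfulness and to identify the extra automorphisms in the $N=3$ case with $\mc{O}_F^\times/\{\pm 1\}\cong C_3$, you defer $N=1$ to Remark~\ref{rmk:Q(i)}, and you establish étaleness via formal moduli and properness via N\'eron models as in Lemma~\ref{lem:proper}. The paper likewise concludes surjectivity onto a single component from the source's connectedness, so your argument is correct and takes the same route.
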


\begin{proof}
Comparing the morphisms in the induced map on groupoids of $S$-points, we see 
from our Case I analysis that an isomorphism
$$ E \times \bar{E} \rightarrow E' \times \bar E' $$
is either of the form
$$ \alpha \times \frac{1}{N} q' \alpha q^\vee $$
for an isomorphism 
$$ \alpha: (E,q) \xrightarrow{\cong} (E',q') $$
in $\mc{M}_0(N)(S)$
or of the form
$$ (\alpha \times \frac{1}{N} q' \alpha q) \circ \tau' $$
for an isomorphism 
$$ \alpha: (\bar{E}, q^\vee) \xrightarrow{\cong} (E', q') $$
in $\mc{M}_0(N)(S)$.  (The case $F = \QQ(i)$ is an exception, as noted in
Remark~\ref{rmk:Q(i)}.) 

The Case II situation is analogous, since the
isomorphisms above descend through the diagonal quotient by the $2$-torsion
of the elliptic curve.  The only exception is the case where $F =
\QQ(\omega)$, where, as noted in the Case II analysis, one gets additional
automorphisms from composition with the complex multiplication by cube
roots of unity.

The verification that $\Phi'$ is \'etale and surjective on a connected
component is by the same methods outlined in the proof of
Theorem~\ref{thm:etale}.  The equivalence of the formal moduli functors at
mod $p$-points implies the map is \'etale, and properness is established
through the use of N\'eron models.
\end{proof}

\section{Computation of $\TAF_{GU}(K_1)$}
\label{sec:ellipticquotient}

The analysis of the moduli in the previous section now allows us to
make calculations in homotopy.  

\begin{thm}
\label{thm:genericfricke}
If $p > 3$ and $N \ne 1,3$, the topological automorphic forms
spectrum $\TAF_{GU}(K_1)$ has a factor $E$ whose
homotopy groups are given by the $p$-completion of the subring
\[
E_{2*} \subseteq M_*(\Gamma_0(N))_{\ZZ_p}[\Delta^{-1}]
\]
of the ring of modular forms for $\Gamma_0(N)$ over $\ZZ_p$, consisting of
those elements invariant under an involution.
\end{thm}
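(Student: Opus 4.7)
The plan is to apply the identification from Theorem~\ref{thm:quotient} together with the functoriality of Lurie's sheaf $\mc{E}_{GU}$, then reduce the global sections calculation to a homotopy fixed point computation for an action on $\TMF$-with-level.

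First, I would invoke Theorem~\ref{thm:quotient}(3), which says that for $N \ne 1, 3$ the map $\Phi': \mc{M}_0(N)_{\ZZ_p}\mmod\bra{w} \to \Sh(K_1)$ is an inclusion of a connected component. Global sections of $\mc{E}_{GU}$ over this component therefore give a factor $E$ of $\TAF_{GU}(K_1)$. Next, using the remark preceding the theorem, the canonical rank $1$ summand of $\Lie \Phi'(E)$ (and, functorially, of the $p$-divisible group $\Phi'(E)(u)$) is identified with $\Lie E$ (resp.\ $E(p)$) from the elliptic curve itself. By the functoriality of Lurie's theorem, the pullback $(\Phi')^* \mc{E}_{GU}$ is therefore identified with the sheaf $\mc{E}_{GL_2}$ on $\mc{M}_0(N)_{\ZZ_p}\mmod\bra{w}$ associated to the universal elliptic curve with $\Gamma_0(N)$-structure. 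Global sections over $\mc{M}_0(N)_{\ZZ_p}$ give the $\Gamma_0(N)$-variant $\TMF_0(N)_p$ of topological modular forms, and global sections over the stack quotient are the homotopy fixed points:
\[
E \simeq \TMF_0(N)_p^{h\bra{w}}.
\]

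Since $p > 3$, in particular $2$ is invertible in $\ZZ_p$, so the descent/homotopy fixed point spectral sequence
\[
H^s(\bra{w}, \pi_t \TMF_0(N)_p) \Rightarrow \pi_{t-s} E
\]
collapses to the zero-line. Since $p > 3$ and $N$ is prime to $p$, the homotopy groups $\pi_{2*} \TMF_0(N)_p$ are canonically identified with the $p$-completion of $M_*(\Gamma_0(N))_{\ZZ_p}[\Delta^{-1}]$ (the odd groups vanish), so taking $\bra{w}$-invariants gives precisely a subring $E_{2*} \subseteq M_*(\Gamma_0(N))_{\ZZ_p}[\Delta^{-1}]^\wedge_p$ cut out by an involution.

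What remains is to verify that the action really is induced by an honest (order two) involution on modular forms. The Fricke involution on $\mc{M}_0(N)$ sends $(E,q) \mapsto (\bar E, q^\vee)$, and acts on classical weight-$k$ modular forms by the usual Atkin--Lehner/Fricke rule. However, as emphasized in the remark just before the theorem, the identification of the summand $\Lie \Phi'(E)$ involves a rescaling of $q\co \Lie E \to \Lie \bar E$ by the scalar $a = \sqrt{-N} \in \mb Z_p$; this contributes a factor of $a^{-k}$ on weight-$k$ forms. The main thing to check is therefore that the combined operation (classical Fricke composed with scaling by $a^{-k}$) squares to the identity on $M_{2k}(\Gamma_0(N))_{\ZZ_p}$, so that the corresponding $C_2$-action makes sense and the invariants are what the theorem claims. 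This is the one nontrivial verification: the classical Fricke involution squared acts on weight-$k$ forms by $(-N)^{-k}$, and this is exactly cancelled by the square of the scaling factor $a^{-k} = (\sqrt{-N})^{-k}$, so the composite has order $2$. With this in hand, $E_{2*}$ is the fixed subring described in the statement.
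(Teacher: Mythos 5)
Your approach matches the paper's proof essentially step for step: restrict to the connected component via Theorem~\ref{thm:quotient}(3), pull back the Lurie sheaf along $\Phi'$ to identify it with $\mc{E}_{GL_2}$, take homotopy fixed points over the degree-2 Galois cover $\mc{M}_0(N)_{\ZZ_p} \to Y$, and observe the descent spectral sequence collapses since $p > 3$. The paper phrases step 3 via an explicit pullback square exhibiting the Galois cover rather than writing $E \simeq \TMF_0(N)_p^{h\bra{w}}$ directly, but the content is identical.

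The one place your write-up adds something the paper leaves implicit is the verification that the rescaled Fricke operation has order two on $M_*(\Gamma_0(N))_{\ZZ_p}$. That is a reasonable thing to check, but the arithmetic you give for it doesn't quite close. The unnormalized Fricke operation (lifting $\omega$ along $q^*$) squared acts on weight-$k$ forms by $N^{k}$, since $q^\vee q = [N]$, not by $(-N)^{-k}$ as you wrote. Combined with the rescaling by $a^{-k}$, whose square is $a^{-2k} = (-N)^{-k}$, the composite squares to $N^{k}\cdot(-N)^{-k} = (-1)^k$. With your numbers the two factors do not cancel at all (both are $(-N)^{-k}$, so the product is $N^{-2k}$, not $1$). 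The correct punchline is that $(-1)^k$ is indeed $1$ on the relevant ring, but only because all nonzero modular forms for $\Gamma_0(N)$ have even weight (since $-I \in \Gamma_0(N)$); that parity fact is what makes the rescaled operation an honest involution, and it deserves to be named.
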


\begin{proof}
If $p > 3$, the Adams-Novikov
spectral sequence is concentrated on the zero-line, and collapses to give
an isomorphism 
$$ \pi_{2*} \TMF_0(N)_{p} \cong
M_*(\Gamma_0(N))_{\ZZ_p}[\Delta^{-1}]^\wedge_p. $$ 
In this generic case $N \neq 1,3$, the description of the allowable
isomorphisms asserts that the map $\mc{ M}_0(N)_{\ZZ_p} \to Y$ is a
Galois cover with Galois group $\mb Z/2$, as we have pullback diagram
of Deligne-Mumford stacks 
\xym{
\mc{ M}_0(N)_{\ZZ_p} \coprod \mc{ M}_0(N)_{\ZZ_p} \ar[r] \ar[d] & 
\mc{ M}_0(N)_{\ZZ_p} \ar[d]^{\Phi'} \\
\mc{ M}_0(N)_{\ZZ_p} \ar[r]_{\Phi'} & Y \\
}
where $Y$ is the image component of $\Phi'$ in $\Sh(K_1)$.  
Here the two factors in
the coproduct correspond to the identity morphism and the twist
morphism.  We get a descent spectral sequence with $E_2$-term
\[
H^s(\mb Z/2;\pi_t \TMF_0(N)_{p}) \Rightarrow E_{t-s}.
\]
The group $\mb Z/2$ acts via the Fricke involution on the moduli.  The
lift of the $\mb Z/2$-action to the line bundle of invariant $1$-forms
is the involution obtained by rescaling the natural isogeny by
$a \in \mb Z_p$.  If $p > 2$, the higher cohomology vanishes
and we find that the homotopy of the result consists of the
involution-invariant elements in the ring of modular forms.
\end{proof}

A complete description of this ring rests on a complete description of
the ring of $p$-integral modular forms for $\Gamma_0(N)$, together
with the action of the canonical involution on the $p$-divisible
group.  Thus a more detailed computation requires a case-by-case
analysis.

The rest of the section will be devoted to such an analysis for 
the cases where $N \leq 3$.

\begin{prop}
If $N = 1$, the spectrum  
$\TAF_{GU}(K_1)$
has homotopy groups given by the subring
\[
\pi_* \TAF_{GU}(K_1) \cong \mb Z_p[c_4, c_6^2, \Delta^{-1}]^\wedge_p
\subset \mb Z_p[c_4,c_6,\Delta^{-1}]^\wedge_p
\]
of the $p$-completed ring of modular forms.
\end{prop}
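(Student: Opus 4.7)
The plan is to reduce this case to already established computations. First, I would observe that $N=1$ corresponds to $F = \QQ(i)$, since $\mc{O}_F = \ZZ[i]$ and the fixed sign convention places us in the regime $-N \equiv 3 \pmod 4$. Under this identification, Remark~\ref{rmk:Q(i)} tells us that the compact open subgroup $K_1$ is conjugate (inside $GU(\AF^{p,\infty})$) to $K_0$, so the Shimura stacks $\Sh(K_1)$ and $\Sh(K_0)$ are isomorphic as $p$-complete Deligne--Mumford stacks in a way that identifies the universal polarized $F$-linear abelian schemes and their $u$-divisible summands. By the functoriality of Lurie's construction $\GG \mapsto \mc{E}_\GG$, this isomorphism lifts to an equivalence $\mc{E}_{GU}(K_1) \simeq \mc{E}_{GU}(K_0)$ of presheaves of $E_\infty$-ring spectra, whence $\TAF_{GU}(K_1) \simeq \TAF_{GU}(K_0)$ on global sections.

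Second, I would invoke Corollary~\ref{cor:tensor}(1), which already computes the right-hand side for $F = \QQ(i)$ as
\[
\pi_* \TAF_{GU}(K_0) \cong \ZZ_p[c_4, c_6^2, \Delta^{-1}]_p^\wedge \subset \ZZ_p[c_4,c_6,\Delta^{-1}]_p^\wedge,
\]
giving the desired identification. Note that since $p$ must split in $\QQ(i)$, we automatically have $p \equiv 1 \pmod 4$ and in particular $p > 3$, so the coefficient ring of $\TMF_p$ has the stated polynomial form without torsion complications.

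If one prefers a direct argument avoiding Remark~\ref{rmk:Q(i)}, an alternative route is to apply Theorem~\ref{thm:quotient}(1): the map $\Phi'\co \mc{M}_{\mit{ell},\ZZ_p} \to \Sh(K_1)$ is a degree $2$ Galois cover, so descent for the Jardine-fibrant sheaf $\mc{E}_{GU}$ yields $\TAF_{GU}(K_1) \simeq \TMF_p^{hC_2}$. Since $p$ is odd, the homotopy fixed point spectral sequence collapses to the zero line, and one is reduced to identifying the $C_2$-action on $\pi_*\TMF_p$. Tracking the Fricke involution through the rescaling of the natural isogeny by $a = \sqrt{-1} \in \ZZ_p$ (as in the proof of Theorem~\ref{thm:genericfricke}) shows that the action on invariant $1$-forms is multiplication by $i$, hence on weight $k$ modular forms by $i^k$; explicitly $c_4 \mapsto c_4$, $c_6 \mapsto -c_6$, $\Delta \mapsto \Delta$. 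Taking invariants yields exactly $\ZZ_p[c_4, c_6^2, \Delta^{-1}]_p^\wedge$.

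There is no serious obstacle here: the essential work has been done in Sections~\ref{sec:tensor}--\ref{sec:elliptictensor} (or equivalently Section~\ref{sec:quotient}), and both routes agree. The only point requiring care is the identification of the involution's action on modular forms in the second approach, which follows exactly the weight-$k$-via-$i^k$ analysis already used in Corollary~\ref{cor:tensor}.
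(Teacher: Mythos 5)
Your primary route is exactly the paper's proof: invoke Remark~\ref{rmk:Q(i)} to identify $\Sh(K_1)$ with $\Sh(K_0)$ (since $K_1$ is conjugate to $K_0$ when $N=1$), then quote the computation already carried out for $F = \QQ(i)$ in Theorem~\ref{thm:tensor}(1) and Corollary~\ref{cor:tensor}(1). Your alternative route via Theorem~\ref{thm:quotient}(1) and descent along the degree-$2$ Galois cover is also correct and consistent with the paper's earlier analysis (the action on weight-$k$ forms by $i^k$), though the paper itself does not spell it out and prefers the shorter reduction.
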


\begin{proof}
In this case, we may (up to natural isomorphism) take the isogeny
$q\co E \to \bar E$ to be the identity map.  As we observed in
Remark~\ref{rmk:Q(i)}, we are simply
restating the $\mb Q(i)$ case of Theorem~\ref{thm:tensor}.
\end{proof}

\begin{thm}
If $p \ne 3$ and $N = 2$, the topological automorphic forms spectrum $\TAF_{GU}(K_1)$
has a factor $E$ whose
homotopy groups are given by the subring
\[
\pi_* E \cong \mb Z_p[q_2, D^{\pm 1}]^\wedge_p \subset
\pi_* \TMF_0(2)_p
\]
of the $p$-completed ring of modular forms of level $2$, 
, where $|q_2| = 4$ and $|D| = 8$.
\end{thm}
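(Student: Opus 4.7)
The plan is to apply Theorem~\ref{thm:genericfricke} at $N = 2$ and then compute the rescaled-Fricke invariants explicitly via a symmetric-function calculation on the universal Weierstrass model for $\Gamma_0(2)$.

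Since $p > 3$ and $N = 2 \notin \{1,3\}$, Theorem~\ref{thm:genericfricke} identifies $\pi_* E$ with the rescaled-Fricke invariants of $\pi_* \TMF_0(2)_p$. The Adams--Novikov spectral sequence collapses at such primes, so
\[
\pi_* \TMF_0(2)_p \;\cong\; \ZZ_p[a_2, a_4, \Delta^{-1}]^\wedge_p,
\]
where $a_2, a_4$ are the coefficients of the universal Weierstrass form $y^2 = x^3 + a_2 x^2 + a_4 x$ (with $\Gamma_0(2)$-subgroup $\{(0,0),\infty\}$), with $|a_i| = 2i$ and $\Delta = a_4^2(a_2^2 - 4a_4)$ up to a unit.

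To determine the Fricke action I will use the classical $2$-isogeny $q\co E \to \bar E$ by the kernel $\{(0,0),\infty\}$, which sends $(a_2, a_4)$ to $(-2 a_2,\, a_2^2 - 4 a_4)$ and satisfies $q^*\omega_{\bar E} = \omega_E$ in the Weierstrass normalization. Combining this with the Lie-algebra rescaling by $a = \sqrt{-2} \in \ZZ_p^\times$ specified in Section~\ref{sec:quotient}, the induced involution on a weight-$k$ form is $\tilde w = a^{-k} q^*$; this is a genuine involution because $a^2 = -2$ cancels the factor $2^k$ coming from $W^2 = [2]$ on the moduli. On the generators one reads off $\tilde w(a_2) = a_2$ and $\tilde w(a_4) = a_2^2/4 - a_4$.

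Finally, the invariants are computed by symmetric-function theory. Writing $u := a_4$ and $v := \tilde w(a_4) = a_2^2/4 - a_4$, the involution swaps $u \leftrightarrow v$ and fixes $q_2 := a_2$; since $p > 2$ makes $2$ invertible, the polynomial invariants are $\ZZ_p[q_2, D]$ with $D := uv$ (the other elementary symmetric function $u+v = q_2^2/4$ already lying in $\ZZ_p[q_2]$). The identity $\Delta = 4 a_4 \cdot D$ shows that inverting $\Delta$ makes $D$ invertible, and the relation $\Delta \cdot \tilde w(\Delta) = 16\, D^3$ exhibits both $\Delta^{\pm 1}$ and $\tilde w(\Delta)^{\pm 1}$ as Laurent expressions in $q_2$ and $D$. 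Passing to $p$-completion then yields $\pi_* E \cong \ZZ_p[q_2, D^{\pm 1}]^\wedge_p$ as claimed. The principal obstacle will be justifying this last step in full rigor: one must check that every $\tilde w$-fixed element of the $\Delta$-localized ring can be rewritten in terms of $q_2$ and $D$ alone, which reduces to an elementary but slightly delicate calculation in the ring of symmetric functions in $(u,v)$ subject to the constraint $u + v = q_2^2/4$.
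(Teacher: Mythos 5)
Your proof is correct and follows essentially the same route as the paper: invoke Theorem~\ref{thm:genericfricke} at $N=2$, write down the ring $\pi_*\TMF_0(2)_p$ in a Weierstrass presentation, compute the rescaled-Fricke involution from the V\'elu $2$-isogeny together with the $\sqrt{-2}$ rescaling, and extract the invariant subring after inverting $\Delta$. The only difference is packaging: the paper diagonalizes the involution by passing to the $(-1)$-eigenvector $r_4 = 8q_4 - q_2^2$ and exhibits the invariants as $\ZZ_p[q_2, q_2^4 - r_4^2, (q_2^4-r_4^2)^{-1}]$, while you keep the two ``conjugate'' coordinates $u = a_4$ and $v = \tilde w(a_4)$ and use elementary symmetric functions; since $q_2^4 - r_4^2 = 64\,uv$ these differ only by the linear change of variables $r_4 = 4(u-v)$ and a unit, so the two arguments are interchangeable. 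The step you flag as ``slightly delicate'' is in fact routine once you note that $u+v$ is a polynomial in $q_2$, so $\ZZ_p[q_2,u]^{\tilde w} = \ZZ_p[q_2,uv]$, and that inverting $\Delta = 4u^2v$ inverts $D = uv$, with $\Delta/w(\Delta)$ and $\Delta w(\Delta) = 16D^3$ showing the localizations agree (using that all weights present are even, so $w^2 = (-1)^{\mathrm{wt}} = 1$).
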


\begin{proof}
In the case $N = 2$, the constraint $p > 3$ is forced by
the requirement that $p$ splits in $\mb Q(\sqrt{-2})$ and $p \ne 3$.  
From Theorem
\ref{thm:genericfricke}, we have that $\TAF_{GU}(K_1)$ has a summand 
whose homotopy consists of the invariants in the $p$-completed ring of
modular forms of level $2$ invariant under the involution.

See \cite{markmodular} for a proof of following descriptions.  The
$p$-completed ring of $\Gamma_0(2)$-modular forms with the
discriminant inverted is
\[
\mb Z_p[q_2, q_4, \Delta^{-1}]^\wedge_p,
\]
where $\Delta = q_4^2(16q_2^2 - 64 q_4)$.  The self-map $t$ satisfying
$t^2 = [2]$ that gives rise to the involution is given on
homotopy by
\begin{eqnarray*}
t^*(q_2) &=& -2q_2,\\
t^*(q_4) &=& q_2^2 -4q_4.
\end{eqnarray*}
The involution itself is then given by
\begin{eqnarray*}
w(q_2) &=& q_2,\\
w(q_4) &=& \frac{1}{4} q_2^2 - q_4.\\
\end{eqnarray*}
We formally define the element $r_4$ as $8q_4 - q_2^2$.  The elements
$q_2, r_4,$ and $\Delta^{-1}$ thus generate the ring of modular forms,
and the involution $w$ negates $r_4$.  In this expression, we have the
following identities.
\begin{eqnarray*}
\Delta &=& \frac{1}{8}(q_2^2+r_4)^2 (q_2^2 - r_4)\\
\Delta w(\Delta) &=& \frac{1}{64}(q_2^4-r_4^2)^3.
\end{eqnarray*}

The subring of the ring of modular forms invariant under the 
involution is then generated by
\[
q_2, (q_2^4 - r_4^2), (q_2^4 - r_4^2)^{-1},
\]
as desired.
\end{proof}

\begin{thm}
If $N = 3$, the topological automorphic forms spectrum $\TAF_{GU}(K_1)$
has a summand $E$ whose
homotopy is a subring
\[
\mb Z_p[a_1^6, D^{\pm 1}]^\wedge_p \subset
\pi_* \TMF_0(3)_p
\]
of the $p$-completed ring of modular forms of level $3$, where
$|a_1^6| = |D| = 12$.
\end{thm}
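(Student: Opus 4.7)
The plan is to combine the $C_3$-Galois descent from Theorem~\ref{thm:quotient}(2) with the Fricke descent used in the proof of Theorem~\ref{thm:genericfricke}. By Theorem~\ref{thm:quotient}(2), for $N=3$ the map $\Phi'\co \mc{M}_0(3)_{\ZZ_p}\mmod\langle w\rangle \to Y$ is a $C_3$-Galois cover onto the connected component $Y\subseteq\Sh(K_1)$ in its image, and, as in the preceding sections, the pullback of the universal $1$-dimensional summand $\mbf{A}(u)$ is identified with $\mbf{E}(p)$. Hence the summand $E := \mc{E}_{GU}(Y)$ should satisfy
$$ E \simeq \bigl(\TMF_0(3)_p^{h\langle w\rangle}\bigr)^{hC_3}. $$
Since $p$ splits in $F=\QQ(\omega)$, we have $p\equiv 1\pmod{3}$, so $p\geq 7$ and $6$ is a unit in $\pi_*\TMF_0(3)_p$. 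I expect both the $C_2$- and $C_3$-homotopy fixed point spectral sequences to collapse to the zero line, reducing the computation to the ring of $\langle w\rangle\times C_3$-invariants in $\pi_{2*}\TMF_0(3)_p$.

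Next I would identify $\pi_{2*}\TMF_0(3)_p$, for $p>3$, as the $p$-completion of the even-weight subring of $\ZZ_p[a_1,a_3][\Delta^{-1}]$, where $a_1,a_3$ are the Weierstrass parameters for the universal $\Gamma_1(3)$-elliptic curve (of topological degrees $2$ and $6$) and $\Delta = a_3^3(a_1^3 - 27a_3)$. Arguing exactly as in the proof of Corollary~\ref{cor:tensor}, the generator of $C_3 = \mu_3\subset\mc{O}_F^\times$ acts on the $p$-divisible group $\mbf{A}(u)$, and hence on weight-$k$ invariant differentials, by $\omega^k$; thus $a_3$ is fixed and $a_1\mapsto\omega a_1$. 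Intersecting the $C_3$-fixed subring with the even-weight subring then gives a graded ring whose topological degree-$12$ piece is spanned over $\ZZ_p$ by $a_1^6$, $a_1^3 a_3$, and $a_3^2$, subject to the obvious relation $(a_1^3 a_3)^2 = a_1^6\cdot a_3^2$.

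The remaining and decisive step is to impose invariance under the Fricke involution $w$. Using the description of $w$ via the dual isogeny $q^\vee$, together with the rescaling by $\sqrt{-3}\in\ZZ_p^\times$ on $p$-divisible groups noted in the remark preceding Theorem~\ref{thm:quotient}, one can compute the action of $w$ on $a_1$ and $a_3$ by the same kind of explicit calculation carried out in \cite{markmodular} and in Theorem~\ref{thm:genericfricke}. I expect the outcome to be that $a_1^6$ is fixed, while $w$ acts on the $2$-dimensional $\ZZ_p$-span of $\{a_1^3 a_3,\, a_3^2\}$ by an involution whose $(+1)$-eigenspace is spanned by a single element $D$ that remains a unit after inverting $\Delta$. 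Algebraic independence of $a_1^6$ and $D$ will then follow by a dimension count, yielding the claimed isomorphism $\pi_* E \cong \ZZ_p[a_1^6, D^{\pm 1}]^\wedge_p$. The main obstacle is precisely this last step: writing $w^*$ explicitly on the Weierstrass generators of $\pi_*\TMF_0(3)_p$ and verifying the invertibility of the resulting invariant $D$; everything else is a formal application of Galois descent combined with the already-established ring structure on $\pi_*\TMF_0(3)_p$.
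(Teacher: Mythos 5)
Your high-level strategy matches the paper's: interpret the factor of $\TAF_{GU}(K_1)$ as a homotopy fixed point spectrum of $\TMF_0(3)_p$, use $p\geq 7$ to collapse the descent spectral sequence to the $0$-line, and reduce to computing invariants in the ring of $p$-integral modular forms of level $3$. Splitting the descent into a $\langle w\rangle$-step followed by a $C_3$-step is harmless; the paper instead treats it as one $\ZZ/6$-Galois cover of the image component $Y$, with the pullback diagram
\[
\coprod^{6}\mc{M}_0(3)_{\ZZ_p}\ \cong\ \mc{M}_0(3)_{\ZZ_p}\times_Y\mc{M}_0(3)_{\ZZ_p},
\]
and a single spectral sequence $H^s(\ZZ/6;\pi_t\TMF_0(3))\Rightarrow E_{t-s}$. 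That difference is purely organizational. Your identification of the $C_3$-action (via $\omega^k$ on weight-$k$ forms, so $a_3$ fixed and $a_1\mapsto\omega a_1$) also agrees with the paper.

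The genuine gap is exactly where you flag it: you never write down the action of the Fricke-plus-rescaling involution $w$ on the generators, and this computation is the entire content of the paper's proof. The paper imports the explicit formula for the Atkin--Lehner self-map $t$ with $t^2=[3]$ from Mahowald--Rezk, then rescales by $a=\sqrt{-3}\in\ZZ_p^\times$ (raised to the weight) to produce $w$, obtaining
\[
w(a_1^2)=a_1^2,\qquad w(a_1a_3)=\tfrac{1}{27}a_1^4-a_1a_3,\qquad w(a_3^2)=\tfrac{1}{27^2}a_1^6-\tfrac{2}{27}a_1^3a_3+a_3^2.
\]
Without a formula of this kind your ``expected outcome'' cannot be verified, and in fact part of what you expect is wrong: you propose that $w$ restricts to an involution on the $\ZZ_p$-span of $\{a_1^3a_3,\ a_3^2\}$ with a $1$-dimensional $(+1)$-eigenspace. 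But $w(a_1^3a_3)$ and $w(a_3^2)$ both have nonzero $a_1^6$-components, so $w$ does not preserve that span. Diagonalizing $w$ requires leaving the polynomial ring: the paper passes to the larger ring on $a_1^2$ and $d_2:=54\,a_3/a_1-a_1^2$ (a weight-$2$ element not in $M_*(\Gamma_0(3))$), on which $w$ acts diagonally, sets $D=a_1^6-a_1^2d_2^2$, checks that $\Delta\cdot w(\Delta)$ is (up to a unit) $D^4$, and then intersects back with the honest ring of $\Gamma_0(3)$-modular forms via the constraint $2k\geq 2l$ on monomials $a_1^{2k}d_2^{2l}D^{-m}$. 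Only after that does one impose the $\ZZ/3$-condition (total weight divisible by $3$) to pick out $a_1^6$ and $D$. This auxiliary change of coordinates and the $\Delta\cdot w(\Delta)$ identity are the missing ideas your proposal would need to make the final step go through.
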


\begin{proof}
In the case $N = 3$, the constraint $p \equiv 1 \mod 3$ is forced by
the requirement that $p$ splits in $\mb Q(\sqrt{-3})$.  The map $\mc{
  M}_0(3)_{\ZZ_p} \to Y$ is a Galois cover, as we have the following
pullback diagram of moduli.
\xym{
\coprod^{6} \mc{ M}_0(3)_{\ZZ_p} \ar[r] \ar[d] & 
\mc{ M}_0(3)_{\ZZ_p}\ar[d]^{\Phi'} \\
\mc{ M}_0(3)_{\ZZ_p} \ar[r] & Y \\
}
Here $Y$ is the image component of $\Sh(K_1)$.  The six factors in the
coproduct correspond to compositions of the action of the third roots
of unity and the involution.  The involution commutes with the action
of the roots of unity.  We get a descent spectral sequence of the form
\[
H^s(\mb Z/6;\pi_t \TMF_0(3)) \Rightarrow E_{t-s},
\]
As $p > 3$, the higher cohomology vanishes.  In this case, the
ring of modular forms for level 3 structures, together with the Fricke
involution, is known; see Mahowald and Rezk \cite{mahowaldrezk}.  We
list the result at primes away from 6.
\[
\TMF_0(3)[1/6]_* \cong \mb Z [1/6][a_1^2, a_1 a_3, a_3^2, \Delta^{-1}]
\]
where $\Delta = a_1^3 a_3^3 - 27 a_3^4$.  The self-map $t$ satisfying
$t^2 = [3]$ that gives rise to the involution is given on
homotopy by
\begin{eqnarray*}
t^*(a_1^2) &=& -3a_1^2,\\
t^*(a_1 a_3) &=& \frac{1}{3} a_1^4 - 9 a_1 a_3,\\
t^*(a_3^2) &=& -\frac{1}{27}a_1^6 + 2 a_1^3 a_3 - 27 a_3^2.\\
\end{eqnarray*}
The involution itself is then given by
\begin{eqnarray*}
w(a_1^2) &=& a_1^2,\\
w(a_1 a_3) &=& \frac{1}{27} a_1^4 -  a_1 a_3,\\
w(a_3^2) &=& \frac{1}{27^2}a_1^6 - \frac{2}{27} a_1^3 a_3 + a_3^2.\\
\end{eqnarray*}
We formally define the element $d_2$ as $(54 \frac{a_3}{a_1} - a_1^2)$.
The elements $a_1^2$ and $d_2$ thus generate a larger ring where the
involution $w$ negates $a_1^2$ and negates $d_2$.  In this
expression, we have the identity
\[
\Delta w(\Delta) = \frac{1}{2^8 3^{18}}(a_1^6 - a_1^2 d_2^2)^4 =
\frac{1}{2^8 3^{18}} D^4.
\]

The subring of the ring of modular forms invariant under the
involution is then generated by elements $a_1^{2k} d_2^{2l} D^{-m}$,
where $2k \geq 2l$.  This subring is generated by the elements
\[
a_1^2, D, D^{-1}.
\]
As in Theorem \ref{thm:tensor}, the third root of unity $\omega$ acts
on modular forms of weight $k$ by multiplication by $\omega^k$.
Therefore, the subring of elements invariant under the action of $\mb
Z/3$ consists precisely of those elements of total degree divisible by
$6$.  This subring is generated by the algebraically independent
elements $a_1^{6}$ and $D$, together with $D^{-1}$.
\end{proof}

\bibliographystyle{amsalpha}
\nocite{*}
\bibliography{absurf4}

\end{document}